\documentclass[11pt, reqno]{amsart}
\usepackage{verbatim}
\usepackage{amssymb,amsmath,amsthm}
\usepackage{graphicx}
\usepackage{epsfig}

\addtolength{\topmargin}{-10ex}
\addtolength{\oddsidemargin}{-3em}
\addtolength{\evensidemargin}{-3em}
\addtolength{\textheight}{15ex}
\addtolength{\textwidth}{5.5em}

\newtheorem{theorem}{Theorem}[section]

\newtheorem{lemma}[theorem]{Lemma}
\newtheorem{proposition}[theorem]{Proposition}

\newtheorem{defn}[theorem]{Definition}
\newcommand{\Prob} {{\mathbb P}}
\newcommand{\Z}{{\mathbb Z}} 
\newcommand{\E}{{\mathbf E}}

\newcommand{\R}{{\mathbb{R}}}
\newcommand{\C}{{\mathbb C}}

\newcommand{\dist}{{\rm dist}}

\newcommand{\slit}{{U}^-}

\renewcommand{\Prob}{\mathbf{P}}

\def \Im {{\rm Im}}
\def \Re {{\rm Re}}
\def \p {\partial}

\def \Disk {{\mathbb D}}

\def \diam {{\rm diam}}

\def \eset {\emptyset}

\def \saws {{\cal R}}
\def \paths {{\cal K}}

\def \ee {\epsilon}

\def \linehere { {\hrule}}
\def \labove { \mtwo \linehere \linehere \linehere \ms   }

\def \lbelow {{\ms \linehere \linehere \linehere \mtwo}}

\def \mtwo {{\medskip \medskip}}
\def \ms {{\medskip}}
\newcommand {{\whoknows}} {{\mathcal C}}

\newenvironment{example}[1][Example.]{\begin{trivlist}
\item[\hskip \labelsep {\bfseries #1}]}{\end{trivlist}}

\newenvironment{advanced}
{ \labove \begin{quote} \begin{small}}
{ \end{small}\end{quote} \lbelow }

\def \begad{\begin{advanced}}
\def \endad{\end{advanced}}

\def \paths{{\mathcal K}}
\def \saws {{\mathcal W}}
\def \dlaplace {L}

\def \F {{\cal F}}
\newcommand{{\pe}}  {\partial_e}
\newcommand {{\lodd}} {{\mathcal J}}
\newcommand{{\inrad}} {{\rm inrad}}

\newcommand {{\cent}} {{w_0}}
\newcommand {{\eb}}  {{\bf e}}

\newcommand{{\saps}}  {{\mathcal X}}
\newcommand {{\dyadic}}  {{\mathcal Q}}
\newcommand {{\hm}} {{\rm hm}}
\newcommand {{\curves}} {\paths}
\newcommand {{\ball}} {{\mathcal B}}
\newcommand {{\measures}} {{\mathcal M}}
\newcommand{{\brown}}  {{\nu}}
\newcommand{{\osc}}  {{\rm osc}}
\newcommand  {{\rbrown}} {{\mu}}
\newcommand{{\lcurves}}{{\curves_L}}
\newcommand{{\bcurves} } {\curves^{\rm bub}}
\newcommand {{\bbrown}}{{\nu^{\rm bub}}}
\newcommand {{\n}} {{\bf n}}
\newcommand {{\timedense}} {\psi}
\newcommand {{\e}} {{\bf e}}
\title[Transition probabilities for two-sided LERW]{Transition probabilities for infinite two-sided loop-erased random walks}

\AtEndDocument{\bigskip{\footnotesize%

(Bene\v{s}) \textsc{Department of Mathematics, Brooklyn College, CUNY} \par  
  \textit{E-mail address}: \texttt{cbenes@brooklyn.cuny.edu} \par
    \addvspace{\medskipamount}

  (Lawler) \textsc{Department of Mathematics, University of Chicago} \par  
  \textit{E-mail address}: \texttt{lawler@math.uchicago.edu} \par
  
  \addvspace{\medskipamount}
  (Viklund) \textsc{Department of Matematics, KTH Royal Institute of Technology} \par
  \textit{E-mail address}: \texttt{fredrik.viklund@math.kth.se}
}}

\author{Christian Bene\v{s}, Gregory F. Lawler, Fredrik Viklund}
\subjclass[2010]{82B41 and 60G99} 
\begin{document}

\begin{abstract}
The infinite two-sided loop-erased random walk (LERW) is a measure on infinite self-avoiding walks that can be viewed as giving the law of the ``middle part'' of an infinite LERW loop going through $0$ and $\infty$. In this note we derive expressions for transition probabilities for this model in dimensions $d \ge 2$. For $d=2$ the formula can be further expressed in terms of a Laplacian with signed weights acting on certain discrete harmonic functions at the tips of the walk, and taking a determinant. The discrete harmonic functions are closely related to a discrete version of $z \mapsto \sqrt{z}$.

\end{abstract}

\keywords{loop-erased random walk; Laplacian; Green's function; discrete harmonic functions}

\maketitle
\section{Introduction}
By chronologically erasing loops from a simple random walk (SRW) one gets a measure on  self-avoiding walks (SAWs) called loop-erased random walk (LERW). There are several different versions depending on the boundary conditions of the SRW one starts with. In this note we will consider two-sided infinite volume versions on $\Z^d$ when $d \ge 2$. The infinite \emph{one-sided}  LERW from $0$ is defined by starting a SRW from $0$ and erasing loops as they form, and letting this process run. For $d \ge 3$ this works as stated but for $d=2$ because of recurrence, one needs to condition the random walk on never returning to $0$. (Technically, this conditioning, which is on a probability $0$ event, is done by weighting by the random walk potential kernel.) Here we will focus on a \emph{two-sided} version of the infinite LERW. We can think of this as giving the distribution of a ``middle part'' of an infinite LERW loop going through both $\infty$ and $0$. Defining this process requires work. Roughly speaking, one can consider chordal LERW conditioned to visit $0$ in successively larger domains,  and take an infinite volume limit. (Chordal LERW in a domain $A$ is obtained by loop-erasing a SRW starting and ending at given distinct boundary points, otherwise staying inside $A$.)
 A priori, any limit will depend on the sequence of domains, and showing that the limit is independent of this (and boundary conditions) is non-trivial.  We refer to \cite{Ltwo} for complete constructions of this process in dimensions $2$ and $3$, these being the more difficult cases, and \cite{L5d, LSW} for the remaining cases. We review parts of the story in the next section. Our main results concern transition probabilities for this model.

\begin{theorem}\label{thm:1}Let $d \ge 2$ and write $\hat{p} = \hat{p}_{0, \infty}^d$ for the infinite two-sided LERW probability measure, for SAWs on $\Z^d$ through $0$ and $\infty$ (see Theorem~\ref{Ltwo}). Let $\eta=[\eta_-, \ldots, \eta_+]$  be a finite SAW going through $0$. Then 
\[   \hat p(\eta) = (2d)^{-|\eta|} \cdot F_\eta \cdot \phi(\eta), \]
and  if  $\zeta \in \Z^{d}$ is a lattice neighbor of $\eta_{+}$, then in particular,
\[
\hat{p}(\eta^\zeta \mid \eta) = 
  \frac{\hat p(\eta^\zeta)}{\hat p(\eta)}  = \frac{1}{2d} \cdot G_{\Z^{d} \setminus \eta}(\zeta,\zeta)
     \cdot \frac{\phi(\eta^\zeta)}{\phi(\eta) }
 .
\]
Here $\eta^{\zeta}$ is the self-avoiding walk obtained by concatenating $\zeta$ to $\eta$, $F_\eta$ is the random walk loop measure of loops that stay in $\Z^d \setminus \{ 0 \}$ and intersect $\eta$, $G_{\Z^{d} \setminus \eta}$ is the random walk Green's function for $\Z^{d}\setminus \eta$, and $\phi$ is the limit of a particular escape probability for two independent random walks with a non-intersection condition given in Section~\ref{twosec}.
\end{theorem}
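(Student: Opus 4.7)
The plan is to first establish the product identity $\hat p(\eta) = (2d)^{-|\eta|}\, F_\eta\, \phi(\eta)$ by approximating $\hat p$ with finite-volume chordal LERW conditioned to pass through $0$, then obtain the conditional transition formula as a direct consequence of the classical loop-measure/Green's function identity.

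For the first step, I would fix a large finite domain $A_n \supset \eta$ with two distinguished boundary vertices $x_n, y_n$, and consider the chordal LERW on $A_n$ from $x_n$ to $y_n$ conditioned to visit $0$. By the classical loop-measure formula, for any SAW $\tilde\eta = \alpha * \eta * \beta$ from $x_n$ through $0$ to $y_n$ (with $\alpha$ the past and $\beta$ the future),
\[
P_{A_n}(\mathrm{LE}=\tilde\eta) \;=\; (2d)^{-|\tilde\eta|}\cdot F_{\tilde\eta}^{A_n}\big/G_{A_n}(x_n,y_n),
\]
where $F_{\tilde\eta}^{A_n}=\exp(m(\mathcal{L}^{A_n}_{\tilde\eta}))$ is the exponential loop mass of loops in $A_n$ meeting $\tilde\eta$. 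Summing over admissible $\alpha,\beta$ (disjoint from $\eta$ and from $\{0\}$) splits the loop factor into loops meeting $\eta$ (contributing $F_\eta^{A_n\setminus\{0\}}$) and loops meeting only $\alpha\cup\beta$; the latter recombine with the simple random walk weights of $\alpha,\beta$ into a two-walk non-intersection partition function $\Psi_n(\eta)$, yielding
\[
P_{A_n}(\text{middle}=\eta) \;=\; (2d)^{-|\eta|}\cdot F_\eta^{A_n\setminus\{0\}}\cdot \Psi_n(\eta).
\]

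Next I would pass to $A_n \uparrow \Z^d$. The prefactor is constant, while $F_\eta^{A_n\setminus\{0\}}\to F_\eta$ by monotone convergence of the loop measure on loops touching the finite set $\eta$. The construction of the infinite two-sided LERW in Theorem~\ref{Ltwo} is precisely engineered so that the left-hand side converges to $\hat p(\eta)$; comparing limits, $\Psi_n(\eta)\to\phi(\eta)$ must exist and equal the escape-type quantity defined in Section~\ref{twosec}, which gives the product identity. The conditional formula then follows by dividing the product identity for $\eta^\zeta$ and for $\eta$:
\[
\frac{\hat p(\eta^\zeta)}{\hat p(\eta)} \;=\; \frac{1}{2d}\cdot\frac{F_{\eta^\zeta}}{F_\eta}\cdot\frac{\phi(\eta^\zeta)}{\phi(\eta)},
\]
and observing that $F_{\eta^\zeta}/F_\eta$ is the exponential of the loop measure of loops in $\Z^d\setminus\{0\}$ meeting $\eta^\zeta$ but not $\eta$. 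Such loops must pass through $\zeta$ and avoid $\eta$; since $0\in\eta$, they are precisely the loops in $\Z^d\setminus\eta$ through $\zeta$, and the standard identity gives $\exp(m(\cdot))=G_{\Z^d\setminus\eta}(\zeta,\zeta)$.

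The main obstacle is justifying the limit $\Psi_n(\eta)\to\phi(\eta)$ and identifying it with the non-intersection escape quantity from Section~\ref{twosec}. This is dimension-dependent: $d\ge 5$ is direct since non-intersection of two independent walks has positive probability; $d=3,4$ require intersection-exponent control; and $d=2$ requires renormalisation by the potential kernel to replace the divergent Green's function. Much of this analysis is already packaged into the construction of $\hat p$ in Theorem~\ref{Ltwo}, so the remaining work in the present theorem is to match the loop-measure decomposition of the chordal formula with the limit objects supplied by that construction, after which the conditional formula is pure bookkeeping.
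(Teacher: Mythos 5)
Your overall route (finite-volume chordal LERW conditioned through $0$, then passing to the limit via Theorem~\ref{Ltwo}) matches the paper's skeleton, and your final observation that $F_{\eta^\zeta}/F_\eta = G_{\Z^d\setminus\eta}(\zeta,\zeta)$ is exactly the right bookkeeping once the product formula is in hand. However, the finite-volume derivation is handled differently, and your version has a genuine gap.

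The paper works directly with the two-walk non-intersection events $V_A^\eta$ (two independent SRWs from the origin, with $LE(S^1)\cap S^2[1,\cdot]=\emptyset$ and $\eta\prec\tilde\eta$), and derives the one-step recursion
\[
\hat p_A(\eta^\zeta\mid\eta)=\frac{G_{A\setminus\eta}(\zeta,\zeta)}{2d}\cdot\frac{\phi_A(\eta^\zeta)}{\phi_A(\eta)}
\]
by decomposing the walk $\omega^1$ at its \emph{last} visit to $\zeta$; iterating gives the product $\hat p_A(\eta)=(2d)^{-|\eta|}F_\eta(\hat A)\,\phi_A(\eta)/\phi_A(0)$. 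The quantity $\phi_A$ is defined outright as a two-walk escape/non-intersection probability, and the limit $\phi(\eta)=\lim_n\phi_{A_n}(\eta)/\phi_{A_n}(0)$ exists \emph{because} the other factors in the identity converge by Theorem~\ref{Ltwo}. In your version, you apply the loop-measure formula for the full chordal path $\tilde\eta=\alpha*\eta*\beta$ and sum over the tails $\alpha,\beta$. Three things then need attention. First, the loop factor for loops meeting $\eta$ is $F_\eta^{A_n}$, not $F_\eta^{A_n\setminus\{0\}}$; since $0\in\eta$, the discrepancy is exactly $G_{A_n}(0,0)$, which must be seen to cancel against the denominator coming from conditioning on the LERW visiting $0$ --- you omit this step. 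Second, the normalization should be the boundary Poisson kernel $H_{\partial A_n}(x_n,y_n)$, not $G_{A_n}(x_n,y_n)$ (which vanishes at boundary points). Third, and most importantly, the identity $\Psi_n(\eta)=\phi_{A_n}(\eta)/\phi_{A_n}(0)$, relating your loop-measure partition function over non-intersecting tails $(\alpha,\beta)$ to the paper's escape probability for one loop-erased walk and one SRW, is not mere ``bookkeeping'': it is a Fomin-type combinatorial equivalence that has to be proven (it is precisely where the asymmetry $LE(\omega^1)\cap\omega^2=\emptyset$ enters). You flag this as the ``main obstacle'' and appeal to ``comparing limits,'' but Theorem~\ref{Ltwo} gives only convergence of the $\hat p_{A_n}$, not the identification of the two quantities. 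The paper avoids this by never introducing a loop-measure partition function for the tails; the path decomposition at the last visit to $\zeta$ delivers the Green's factor directly and makes the iteration transparent.
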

We will prove this theorem in Section~\ref{twosec}. In the plane we can give a different expression for the transition probability. The proof of this result requires some additional work which is done in Section~\ref{planesec}. In order to state the result, let us introduce a few additional functions; we refer to Section~\ref{defs} for more details.
First let $\tau_+$ be the first time SRW $S_j$ on $\Z^{2}$ visits $\mathbb{R}_+ = \{x: x \ge 0\}$ and for $R > 0$, let $\sigma_R = \min\{j
   \geq 0 : |S_j| \geq R\}$. Then we define
   \[ v(z) =   \lim_{R \rightarrow \infty} R^{1/2}
    \, \Prob^z\{ \sigma_R < \tau_+\} \]
    where $\Prob^z$ is the measure of SRW started at $z$. 
The existence of this limit (with an error estimate) is stated in Proposition~\ref{limitexists} which we prove in the Appendix. One can think of $v(z)$ as a discrete version of (a constant times) the imaginary part of $\sqrt{z}$. In fact, we show that $|v(z) -  \frac 4 \pi \, \Im \, \sqrt z| \le c/|z|^{1/2}$. Next, for a given SAW $\eta$ going through $0$
 let
\[   v_\eta(z) = v(z) - \sum_{y \in \eta} H^q_{\Z^2 \setminus \eta}\
   (z,y) \, v(y), \]
     where $H^q_{\Z^2\setminus\eta}$ is the Poisson kernel in $\Z^2\setminus\eta$ for a random walk with signed weights: The edge weights are $q(e) = \pm 1/4$, with negative sign iff $e=(k, k-i)$ with $k \in \mathbb{N}$. The functions $u, u_\eta$ are defined similarly replacing $v$ by a discrete harmonic conjugate. (So $u(z)$ is a discrete version of a constant times $\Re \, \sqrt{z}$.) See \eqref{v} and \eqref{veta} for the definitions.
\begin{theorem}\label{thm2}Consider the setting of Theorem~\ref{thm:1} and set $d=2$. If  $\eta_1=1$ and $\zeta \in \mathbb{Z}^2$ with
$|\eta_+-\zeta| = 1$,  then
\[  \hat p(\eta^\zeta \mid \eta) = \frac 14 \cdot G_{\Z^2 \setminus \eta}^q(\zeta,\zeta)
\cdot \left|\frac{D^q(\eta^\zeta)}{D^q(\eta)}\right|,\]
where
\[ D^q(\eta) = \det \left[\begin{array}{cc}
                L^qv_\eta(\eta_-) & L^qv_\eta(\eta_+)\\
                    L^q u_\eta(\eta_-) & L^q u_\eta(\eta_+)
                      \end{array} \right].\]
Here $G^{q}$ is the Green's function for random walk with signed weights $q$, $L^q$ is the corresponding  Laplacian, and $u_{\eta}, v_\eta$ are as above and defined precisely in Section~\ref{defs}.  
\end{theorem}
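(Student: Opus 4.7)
The plan is to start from Theorem~\ref{thm:1} specialized to $d=2$, which already gives
\[\hat p(\eta^\zeta\mid\eta)=\tfrac14\,G_{\Z^{2}\setminus\eta}(\zeta,\zeta)\,\phi(\eta^\zeta)/\phi(\eta),\]
and to show that the right-hand side equals $\tfrac14\,G^{q}_{\Z^{2}\setminus\eta}(\zeta,\zeta)\,|D^{q}(\eta^\zeta)/D^{q}(\eta)|$. I would pursue two separate identifications: $\phi(\eta)=c_{|\eta|}|D^{q}(\eta)|$ with a prefactor $c_{|\eta|}$ depending only on $|\eta|$, and a signed-weight reinterpretation of the ordinary Green's function in terms of $G^{q}$.

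For the first identification, by its construction in Section~\ref{twosec}, $\phi(\eta)$ is (up to a normalization depending only on $|\eta|$) the renormalized $R\to\infty$ limit of the probability that two independent SRWs started at $\eta_{-}$ and $\eta_{+}$ exit the disk of radius $R$ without hitting $\eta$ or each other. Fomin's non-intersection identity rewrites such a probability as a $2\times 2$ determinant of walk measures. To take the limit $R\to\infty$ cleanly I would invoke the signed-weight trick: a signed random walk on $\Z^{2}\setminus\eta$ realizes an ordinary SRW on the branched double cover of $\Z^{2}\setminus\{0\}$ ramified at $0$, with the positive real axis serving as the branch cut (the hypothesis $\eta_{1}=1$ aligns the cut with $\eta$ at the origin). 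On this cover the two linearly independent signed-harmonic functions on $\Z^{2}\setminus\eta$ growing like $\Im\sqrt{z}$ and $\Re\sqrt{z}$ at infinity with zero boundary values on $\eta$ are exactly $v_\eta$ and $u_\eta$, with explicit asymptotics from Proposition~\ref{limitexists}. The tip values $L^{q}v_\eta(\eta_{\pm})$ and $L^{q}u_\eta(\eta_{\pm})$ are the coefficients in the expansion of Fomin's determinant in this basis, so after passing to the limit one obtains $\phi(\eta)=c_{|\eta|}|D^{q}(\eta)|$.

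For the second identification, the same cover picture converts SRW loops from $\zeta$ to $\zeta$ in $\Z^{2}\setminus\eta$ into signed loops: a loop winding around $0$ an odd number of times crosses the branch cut an odd number of times and acquires a factor $-1$ in $G^{q}$, while a loop with even winding stays on one sheet and carries weight $+1$. This realizes $G^{q}_{\Z^{2}\setminus\eta}(\zeta,\zeta)$ as the (unsigned) Green's function on the cover, which is the right object to pair with the non-intersection determinant from the previous step. Substituting both identifications into Theorem~\ref{thm:1} and taking the ratio between $\eta^{\zeta}$ and $\eta$ cancels the common prefactor $c_{|\eta|}$ and yields the claimed formula; the absolute value in $|D^{q}(\eta^\zeta)/D^{q}(\eta)|$ absorbs a sheet-of-cover sign ambiguity that does not affect the manifestly positive quantity $\hat p$.

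The main obstacle will be the Fomin-to-determinant step, specifically passing to $R\to\infty$ inside the $2\times 2$ non-intersection determinant: this requires the pointwise asymptotics $|v(z)-\tfrac{4}{\pi}\Im\sqrt{z}|\le c|z|^{-1/2}$ (and the analogue for $u$) from Proposition~\ref{limitexists}, uniform estimates on the signed Poisson kernel $H^{q}_{\Z^{2}\setminus\eta}$ near each tip, and careful bookkeeping to verify that $c_{|\eta|}$ really depends only on $|\eta|$ and cancels in the ratio. I expect this to be the technical core of Section~\ref{planesec}.
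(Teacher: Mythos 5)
Your proposed route does not work, and it is also quite different from the paper's actual route. The paper does not try to separately identify $\phi$ with $|D^q|$ and convert $G_{\Z^2\setminus\eta}$ into $G^q_{\Z^2\setminus\eta}$. Instead it works in the finite strips $A_n$ of \eqref{A_n}, invokes the determinantal formula of \cite[Theorem~3.1]{BLV} directly (which already produces $\tfrac14 G^q_{A_n\setminus\eta}(\zeta,\zeta)\,|D^q_n(\eta^\zeta)/D^q_n(\eta)|$ as an expression for the reversal-symmetrized ratio $\bigl(\hat p^{(n)}(\eta^\zeta)+\hat p^{(n)}((\eta^\zeta)^R)\bigr)/\bigl(\hat p^{(n)}(\eta)+\hat p^{(n)}(\eta^R)\bigr)$), shows $\hat p^{(n)}(\eta)=\hat p^{(n)}(\eta^R)(1+O_\eta(n^{-\beta}))$ by a second Fomin-type argument, proves the convergence $n^{3/2} v_\eta^{(n)}\to c_0 v_\eta$ and $n^{3/2} u_\eta^{(n)}\to c_0 u_\eta$ (Lemma~\ref{lem:jul26}, which rests on Lemma~\ref{limitpartial} and Proposition~\ref{limitexists}), and then lets $n\to\infty$ using Theorem~\ref{Ltwo} to identify the limit with $\hat p$. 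Theorem~\ref{thm:1} is never used in the proof of Theorem~\ref{thm2}.

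There are two concrete gaps in your plan. First, the two identifications you propose are mutually inconsistent. If $\phi(\eta)=c_{|\eta|}|D^q(\eta)|$ with $c$ depending only on the length, then combining with Theorem~\ref{thm:1} gives
\[
\hat p(\eta^\zeta\mid\eta)=\tfrac14\,G_{\Z^2\setminus\eta}(\zeta,\zeta)\cdot\frac{c_{|\eta|+1}}{c_{|\eta|}}\cdot\left|\frac{D^q(\eta^\zeta)}{D^q(\eta)}\right|,
\]
and matching this against the target formula forces $G_{\Z^2\setminus\eta}(\zeta,\zeta)/G^q_{\Z^2\setminus\eta}(\zeta,\zeta)$ to equal $c_{|\eta|}/c_{|\eta|+1}$, i.e.\ to depend only on $|\eta|$. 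That is false: the ratio of the $p$- and $q$-Green's functions at $\zeta$ depends on the geometry of $\eta$ and on $\zeta$, not merely on $|\eta|$. Second, the covering-space reinterpretation of $G^q$ is wrong as stated. The unsigned Green's function at a lift of $\zeta$ on the branched double cover counts exactly the loops at $\zeta$ in the base with even winding, and hence equals $\tfrac12\bigl(G_{\Z^2\setminus\eta}(\zeta,\zeta)+G^q_{\Z^2\setminus\eta}(\zeta,\zeta)\bigr)$, not $G^q_{\Z^2\setminus\eta}(\zeta,\zeta)$; so this step does not produce the conversion from $G$ to $G^q$ you need. What is true (and what \cite{BLV} exploits) is that the whole combination $G^q\cdot|D^q|$ has a Fomin/signed-loop-measure interpretation; the two factors cannot be matched separately to the two factors in Theorem~\ref{thm:1}. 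Finally, you have not addressed the actual technical core of the section, namely the convergence $D^q_n\to c\,D^q$ (Lemma~\ref{lem:jul26}), which is where Proposition~\ref{limitexists}, the $q$-Beurling-type estimates (Lemmas~\ref{prop.kias}--\ref{withoutAn}), and Lemma~\ref{limitpartial} are actually used.
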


The definition of the signed weights depends on a choice of branch cut (or ``zipper'') just below the real axis. Other choices are possible, but the resulting formula in the theorem is independent of this choice.

Compare the theorem with the well-known representation of chordal two-dimensional LERW as Laplacian random walk, in the case when the walk runs from $a$ to $b$ (both boundary points) in a domain $A$: given $\eta$ starting from $a$, the probability that the next step is to $\zeta$   equals
\[
\frac{1}{4} \cdot \frac{H_{A \setminus \eta}(\zeta,b)}{LH_{A \setminus \eta}(\eta_+, b)} = \frac{1}{4} \cdot G_{A \setminus \eta}(\zeta, \zeta) \cdot  \frac{L H_{A \setminus \eta^\zeta}(\zeta,b)}{LH_{A \setminus \eta}(\eta_+, b)},
\] where $H_{A \setminus \eta}$ is the usual random walk Poisson kernel,  $L$ is the discrete (random walk) Laplacian here acting on the first variable, and $G$ is the random walk Green's function. In this case, the marked edge is on the boundary of the slit domain and there is no need to use signed weights. The reader may also compare the formula of Theorem~\ref{thm2} with Theorem 3.1 of \cite{BLV} which gives the probability that a chordal LERW visits a particular interior edge in terms of a determinant involving signed Poisson kernels. In fact, that formula is used in the proof of Theorem~\ref{thm2}.

In general it is not easy to compute explicitly $\hat p(\eta)$ for a given SAW $\eta$ using the techniques of this paper, but at the end of the paper we will provide one simple example showing that \begin{equation}\label{sr}
    \hat p(\eta_k) = \frac{1}{4} \, (\sqrt 2 - 1)^{k-1} \end{equation}
    in the case when $d=2$ and $\eta$ is the straight line $\eta = \eta_k  = [0,1,2\ldots,k]$.

This paper was motivated by a question from Kenyon and Wilson when they were preparing a paper on spanning trees with
a two-sided backbone \cite{KenWil} which also proved the formula \eqref{sr} using different techniques.  At the time, we thought that our results \cite{BLV} immediately gave a determinantal formula (a version of Fomin's identity) for the two-sided walk and \eqref{sr}.  It turned out that taking the limit, and, in particular, showing independence of boundary conditions required more work.  This paper was delayed until  Lawler adapted the proof in \cite{Ltwo}, which had originally been only for the three dimensional walk, to the two dimensional case to give the independence.  This allows us now to  build  on \cite{BLV}     to give the determinantal expression in Theorem 1.2.  Besides the fact that we employ different methods, a major difference between our paper and \cite{KenWil} is that we do not deal with the
full spanning tree.  It could however be constructed by starting with the two-sided loop-erased walk in \cite{Ltwo} and using Wilson's algorithm to complete the tree.  

\section{Infinite two-sided LERW}  \label{twosec}
 We  now describe the infinite two-sided LERW centered at the origin as constructed in \cite{Ltwo}. A walk on $\mathbb{Z}^d$ is an ordered sequence of nearest neighbors. A self-avoiding walk (SAW) is a walk visiting each vertex at most once.
Let $\saws = \saws^d$ denote the set of SAWs on $\Z^d$ that
visit the origin.   We can write
\[  \saws = \bigcup_{j=0}^\infty \bigcup_{k=0}^\infty
\saws_{j,k}\] where $\saws_{j,k}$ is the set of
$\eta \in \saws$ with $j+k+1$ vertices that can be written as 
\[  \eta =  [\eta_{-j},\eta_{-j+1}, \ldots,
\eta_0 = 0 ,\eta_1,\ldots, \eta_k]. \]
Each element of $\saws_{j,k}$ can be viewed as a pair
of nonintersecting SAWs starting at the origin and traversing $j$ and $k$
edges, respectively.  If $\eta \in \saws$, we write $\eta_-,\eta_+$
for the initial and terminal vertices, i.e., if $\eta \in \saws_{j,k},$
then $\eta_- = \eta_{-j}$ and $\eta_+ = \eta_k$. 
If $\eta \in \saws_{j,k}$, we write $  \eta^o$ for the
corresponding SAW in $\saws_{0,j+k}$ obtained by
translating $\eta_{-}$ to the origin.  We write $\eta \prec \tilde \eta$ if
$\eta$ is a segment of $\tilde \eta$;  more precisely, if
$\eta \in \saws_{j,k}, \tilde \eta \in \saws_{l,
m}$ then $\eta \prec \tilde \eta$ if and only
if $j \leq l, k \leq   m, $ and
\[            \eta_n = \tilde \eta_n , \;\;\;\; n=-j,-j+1,
\dots, k.\]

The infinite two-sided LERW  
$ [\ldots, \hat S_{-2}, \hat S_{-1}, \hat S_0, \hat S_1,\hat S_2,
\ldots] $ 
is defined by giving the 
finite dimensional distributions 
\[   \hat p(\eta) := \Prob\{[\hat S_{-j},\ldots,\hat S_k]
 = \eta\}, \;\;\;\;\eta \in \saws_{j,k}. \]
They are defined via a limit established in \cite[Theorem~1]{Ltwo} which we discuss now. 
  Suppose $A $ is   simply
connected subset  of $\Z^d$ containing the origin.
  Let $a,b  \in \p A$ be such that there
there exists a self-avoiding path from $a$ to $b $, otherwise in $A $, that goes through the origin. 
Let $\eta \in  \saws$ with $\eta \subset A$. 
Let $S^1,S^2$ be independent simple random walks starting
at the origin and for $j=1,2$, $T_A^j  :=  \min\{t:
S_t^j \not \in A \}$.   Let
$   V_{A }, V_A(a,b)$ denote the non-intersection events
\[   V_A = \left\{LE(S^1[0,T_A^1]) \cap S^2[1,T_A^2]
 = \eset \right\}, \]
 \[    V_{A}(a ,b )  = V_A \cap \{S^1(T^1_A) = a ,
    S^2 (T^2_A) = b \}. \]
    Here $LE$ denotes chronological loop erasure. 
On the event $V_A $, we write  $\tilde \eta$   for the 
element of $\saws$  obtained by concatenating the reversal
of $LE(S^1[0,T_A^1])$ with $LE(S^2[0,T_A^2])$.
If $\eta \in \saws$, we let 
\[ V_A^\eta = V_A \cap \{\eta \prec \tilde \eta\},
\;\;\;\;   V_A^\eta(a,b) = V_A(a,b)
 \cap \{\eta \prec \tilde \eta\},\]   
\[    \hat p_A(\eta) = \frac{\Prob[V_A^\eta]}{\Prob[V_A]},
\;\;\;\;  \hat p_A(\eta; a,b) =
   \frac{\Prob[V_A^\eta(a,b)]}{\Prob[V_A(a,b)]}.\]
In other words, $\hat p_A(\eta; a,b)$ is the probability
 that a loop-erased walk from $a$ to $b$ in $A$ contains
 $\eta$ given that it goes through the origin. For $R > 0$, let $D_R = \{z\in\mathbb{Z}^2:|z|<R\}$.
    
\begin{theorem}[\cite{Ltwo}]  \label{Ltwo}
There exist $c < \infty, u > 0$ and
a function $\hat p: \saws
\rightarrow [0,1]$ such that
  the following holds.  
Suppose   $A$ is a simply connected
 subset of $\Z^d$  containing the origin  and $a,b \in \p A$
   with $\Prob[V_A(a,b)] > 0$.  Then for 
   all $\eta \in \saws$ with 
$\eta \subset D_r \subset D_n \subset A$ and $n \geq 2r$, 
 \[  \left|\log \hat p_A(\eta;a,b) - \log \hat p (\eta )
  \right| \leq c \,\left( \frac rn \right) ^u . \] 
\end{theorem}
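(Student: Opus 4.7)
The plan is to show that $\hat p_A(\eta;a,b)$ decouples, up to a multiplicative factor of $1+O((r/n)^u)$, into a piece depending only on what the two independent walks do inside $D_n$ and a piece depending only on the outside data $(A,a,b)$. The outside piece will cancel between the numerator and denominator of $\hat p_A(\eta;a,b)$, so the remaining inside piece is Cauchy in $n$ and defines $\hat p(\eta)$.

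First I would write $\Prob[V_A^\eta(a,b)]$ as a sum over the two exit configurations (positions, plus enough ``side information'' at scale $n$) of $S^1, S^2$ from the intermediate ball $D_n$. The event $V_A^\eta(a,b)$ factors as (i) an inner event: the two walk segments up to their exits from $D_n$, after loop erasure, produce a self-avoiding path containing $\eta$; and (ii) an outer event: the continuations of the two walks from $\partial D_n$ to $a,b$ avoid each other's loop-erased inner traces and each other. The inner probability depends only on $\eta$, $n$, and the exit data, while all dependence on $(A,a,b)$ sits in the outer factor.

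The key technical step is a separation/coupling lemma for pairs of independent random walks whose loop erasures are conditioned to be non-intersecting and to produce the prescribed inner path $\eta$: conditional on this inner event, the joint distribution of the exit data on $\partial D_n$ differs from its unconditioned counterpart by a multiplicative factor $1 + O((r/n)^u)$. With this in hand, in both numerator and denominator of $\hat p_A(\eta;a,b)$ the outer factor can be summed over exit data to produce essentially the same normalizing constant, which then cancels, yielding
\[ \hat p_A(\eta;a,b) = \hat p^{\mathrm{in}}_n(\eta)\,\bigl(1 + O((r/n)^u)\bigr) \]
for an intrinsic quantity $\hat p^{\mathrm{in}}_n(\eta)$. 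A doubling argument comparing the scales $n$ and $2n$ then shows that $\hat p^{\mathrm{in}}_n(\eta)$ is Cauchy with polynomial rate, which gives both the limit $\hat p(\eta)$ and the stated log-difference bound.

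The main obstacle will be establishing the separation lemma with an explicit power-law rate in a form strong enough to survive the singular condition that the inner loop erasure contain the whole prescribed SAW $\eta$ through the origin. In $d=2$ this is especially delicate because of recurrence: one must control contributions of walks that re-enter a neighborhood of $\eta$ after first reaching $\partial D_n$, and the relevant decoupling rests on the two-walk non-intersection estimates and on the potential-kernel weighting (equivalently, restricting SRW to $\Z^2\setminus\{0\}$) which is the essential ingredient of the construction in \cite{Ltwo} and which makes the $d=2$ case genuinely harder than the $d\ge 3$ case.
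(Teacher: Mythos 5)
This theorem is not proved in the paper you are reading: it is quoted from \cite{Ltwo} and used as an input. So there is no in-paper argument to compare against; the relevant comparison is to the strategy in \cite{Ltwo}, which your sketch tracks at a high level. You correctly identify the two essential moves used there: (i) a decoupling at the intermediate scale $\partial D_n$ into an inner event (depending on $\eta$, $r$, $n$) and an outer event (depending on $A,a,b$) whose interaction is controlled by a quantitative separation lemma, and (ii) a scale-doubling/Cauchy argument that then produces the limit $\hat p(\eta)$ with a polynomial error rate. You also correctly flag that in $d=2$ one must handle returns to $D_n$ after first exit and use the potential-kernel normalization.

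The gap is that the argument is a reduction, not a proof: essentially all of the content of Theorem~\ref{Ltwo} lives in the separation/coupling lemma you invoke, and that lemma is exactly what is hard to prove. Two points you gloss over would need real work. First, it is not enough to show the exit data ``differs from its unconditioned counterpart by $1+O((r/n)^u)$'': the conditioning is on a highly singular event (the loop erasure of $S^1$ up to $\partial D_n$ reproduces the entire prescribed SAW $\eta$ and the two loop erasures stay disjoint), and comparing this to a free random walk exit law fails. What one actually proves is that the conditional exit configuration is comparable (again with rate $(r/n)^u$) to a \emph{fixed reference configuration} that is intrinsic to the inner problem -- a separation-type statement in the sense of Lawler's non-intersection estimates, plus a coupling argument for ``$h$-processes''. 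Second, obtaining a \emph{power-law} rate $u>0$, rather than just $o(1)$ separation, is itself nontrivial; the $o(1)$ version would not yield the $(r/n)^u$ bound in the theorem. Your sketch would be fine as an outline of how \cite{Ltwo} is organized, but as a proof it leaves the central lemma unproved.
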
 

It follows immediately that
  \[  \left|\log \hat p_A(\eta ) - \log \hat p (\eta )
  \right| \leq c \,\left( \frac rn \right) ^u . \] 
We often write this as
\[     \hat p_A(\eta) = \hat p(\eta) \, \left[1 + O\left(\left( \frac rn \right) ^u\right) \right].\]
 By the definition and the theorem, we see that    the law  is reversible, that is, $\hat p(\eta) = 
 \hat p(\eta^R)$, where $\eta^R$ is the reversal of $\eta$,
 and translation invariant in the sense that $\hat p (\eta)
  = \hat p(\eta^o)$.
   Moreover, the law will have  the same reflection and rotation symmetries as $\Z^d$.  In particular, the trivial SAW
 of length $0$, $\eta = [0]$ satisfies $\hat p(\eta) = 1$ and
 $\hat{p}(\eta) = 1/2d$ if $|\eta| = 1$. 
 
Given the theorem, we can also express the probabilities
in terms of transition probabilities.  Since the
measure is invariant under translation, rotation, and
path reversal, it is sufficient to consider $\eta \in \saws_{0,k}$. 
If $|\zeta - \eta_+| = 1$, we write
$    \eta^\zeta =   \eta \oplus [\eta_+,\zeta]$ and 
\[ \hat p(\eta^\zeta \mid \eta)
    = \frac{\hat p(\eta^\zeta)}{\hat p(\eta)}
     = \lim_{n \rightarrow \infty}
      \frac{\hat p_{A_n}(\eta^\zeta)}{\hat p_{A_n}(\eta)}
       = \lim_{n \rightarrow \infty}
      \frac{\hat p_{A_n}(\eta^\zeta;a_n,b_n)}{\hat p_{A_n}(\eta ;a_n,b_n)} . \]
Here we write $A_n$ for a sequence of $A$ as in Theorem \ref{Ltwo} with $A_n \uparrow \mathbb{Z}^d$ as $n \to \infty$ and we write $a_n,b_n$ for boundary points of $A_n$. Theorem \ref{Ltwo} tells us that
the limits can be taken over any sequence of simply
connected $A_n$ with $\dist(0, \p A_n) \rightarrow \infty$
and $a_n,b_n \in \p A_n$ with $\Prob[V_{A_n}(a_n,b_n)] > 0$.
We will give the transition probability in terms of a
function $\phi$ that we will now define.  We will
define it for $\eta \in \saws_{0,j}$; it is extended
to other $\eta$ by the equality $\phi(\eta) = \phi(\eta^o)$.

  \begin{defn}
  Let $\eta 
   \in \saws_{0,j}$  with  $\eta^\zeta \subset A$. 
 \begin{itemize}
 \item {Let $S^1,S^2$ be independent simple random walks starting at $ \eta_+,0$, respectively; let $T_A^1,T_A^2$ be the first
 visits to $\p A$; and let $T_\eta^1, T_\eta^2$ the first visits
 to $\eta$ after time zero by $S^1,S^2$, respectively.}
 \item{Set $\omega^1=S^1[0,T_A^1]$ and
  $\omega^2=S^2[0,T_A^2]$
 . }
 \end{itemize} Then we define: 
 \begin{equation}\label{Edef}    \phi_A(\eta )  = \Prob\{T_A^1 < T_\eta^1,
 T_A^2 < T_\eta^2, LE(\omega^1 ) \cap \omega^2 = \eset  \}, \end{equation}
  \[ \phi_A(\eta;z,w )  = \Prob\{T_A^1 < T_\eta^1,
 T_A^2 < T_\eta^2, LE(\omega^1 ) \cap \omega^2 = \eset,S^1_{T_A^1} = z, S^2_{T_A^2} = w\}.\]  
 \end{defn}
By splitting  
\[   \omega^1 = [\eta_+,\zeta]
 \oplus [\zeta,\ldots,\omega^1_k]
 \oplus [\omega^1_k, \ldots ,\omega^1_n] , \]
where $k$ is the largest index $i$ such that $\omega^1_i
 = \zeta$, we can see that
\begin{equation}\label{sept13}
   \hat p_A(\eta^\zeta\mid \eta) := 
 \frac{ \hat p_A(\eta^\zeta)}{ \hat p_A(\eta)} 
    = \frac{G_{A \setminus \eta}
 (\zeta,\zeta)}{2d}\, \frac{\phi_A(\eta^\zeta)}{
 \phi_A( \eta)} .  
 \end{equation}
 By iterating this, it follows that 
\begin{equation}  \label{may10.3}
     \hat p_A(\eta) = (2d)^{-|\eta|} \, F_\eta(\hat A) \,
           \frac{\phi_A(\eta)}{\phi_A(0) },
           \end{equation}
           and similarly
   \begin{equation}\label{may10.4}
               \hat p_A(\eta;z,w) = (2d)^{-|\eta|} \, F_\eta(\hat A) \,
           \frac{\phi_A(\eta;z,w)}{\phi_A(0;z,w) }.
           \end{equation}
 Here, and throughout, we write $\hat A = A \setminus \{0\},$
  $0$ is used to denote the trivial SAW of length zero,
 and
  \[     F_\eta(\hat A) =  \prod_{j=1}^{|\eta|} G_{A_j}(\eta_j,\eta_j),\]
  where $A_j = A \setminus \{\eta_0,\ldots,\eta_{j-1}\}$.
  Note that we can also write 
\[  F_\eta(\hat A)    = \exp \left\{\sum_l m(l)\right\}, \]
where $m$ denotes the random walk loop measure and the
sum is over all loops $l \subset \hat A$ that intersect
$\eta$. See Proposition 9.3.1 and Lemma 9.3.2 of \cite{LL}.
 
 Theorem \ref{Ltwo} says that we can
 take limits in \eqref{may10.3} and \eqref{may10.4}.
 Let $A_n$ be a sequence of simply connected sets containing
 the origin with $\dist(0, \p A_n) \rightarrow \infty$
 and let $a_n,b_n \in \p A_n$ with $\Prob[V_{A_n}(a_n,b_n)] > 0$.
We can then define
 \[    \phi(\eta) = \lim_{n \rightarrow \infty}
     \frac{\phi_{A_n}(\eta)}{\phi_{A_n}(0) }
  = \lim_{n \rightarrow \infty}
     \frac{\phi_{A_n}(\eta;a_n,b_n)}{\phi_{A_n}(0;a_n,b_n) }.\]
This definition, together with \eqref{may10.3} and \eqref{sept13}, implies Theorem~\ref{thm:1}.

 \section{Infinite two-sided LERW: The planar case}\label{planesec}
 
 In the case $d=2$, one can give a different expression for the transition probability. This will be a fairly simple consequence of work in \cite{BLV} combined with Theorem 1 in \cite{Ltwo} (from which Theorem \ref{Ltwo} was derived). An important ingredient is an idea of Kenyon to introduce a ``zipper'' or branch cut onto the lattice; in \cite{BLV} this was interpreted in terms of random walks and loop measures with weights negative on those edges crossing the zipper. Similar ideas are also important when analyzing the Ising model at criticality, see, e.g., \cite{CHI15}.   
 
  We will use complex notation and write $\Z^2 = \Z + i \Z$.  From here on, let 
\begin{equation}\label{A_n}
 A_n = \{x+iy \in \Z \times \Z: |x| < n\} , \;\;\;\;a_n = -n, b_n = n ,
 \end{equation}
denote the infinite strip and define $p^{(n)}(\eta)$ to be the probability that a LERW from $a_n$ to $b_n$ in $A_n$ includes $\eta$ conditioned on the event that the LERW passes through the
origin. It follows from Theorem \ref{Ltwo} that one has independence of boundary conditions so that
\[   \hat p(\eta) = \lim_{n \rightarrow \infty} p^{(n)}(\eta). \]
We will compute the transition probability by writing the expression for the strip $A_{n}$ and then taking the limit as $n \to \infty$.  This argument
will only use the expressions derived in \cite{BLV} and random walk estimates.

 We will first restate the main result for $d=2$, and then we will define the
 quantities in the statement.

\begin{proposition}  \label{main}  If $\eta \in \saws$ 
with   $\eta_1=1$,
$|\eta_+-\zeta| = 1$,
then
\[  \hat{p}(\eta^\zeta \mid \eta) = \frac 14 \cdot  G_{\Z^2\setminus\eta}^q(\zeta,\zeta)
 \cdot  \left|\frac{D^q(\eta^\zeta)}{D^q(\eta)}\right|,\]
where
\[ D^q(\eta) = \det \left[\begin{array}{cc}
                 L^qv_\eta(\eta_-) & L^qv_\eta(\eta_+)\\
                    L^q u_\eta(\eta_-) & L^q u_\eta(\eta_+)
                      \end{array} \right].\]
  \end{proposition}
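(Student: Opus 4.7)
The plan is to work in the sequence of strip domains $A_n$ from \eqref{A_n} and realize $\hat p(\eta^\zeta \mid \eta)$ as the limit of conditional chordal LERW transition probabilities $p^{(n)}(\eta^\zeta)/p^{(n)}(\eta)$, which is justified by Theorem~\ref{Ltwo}. For each $n$ I would apply the signed-weight determinantal identity from \cite{BLV} to write the unconditional probability that a chordal LERW in $A_n$ from $-n$ to $n$ traces $\eta$ in the form
\[ \tilde p^{(n)}(\eta) \;=\; C^{(n)} \cdot F^q_\eta(A_n) \cdot \det M^{(n)}(\eta), \]
where $F^q_\eta(A_n)$ is a product of signed Green's function values along $\eta$, $M^{(n)}(\eta)$ is the $2\times 2$ matrix of signed Poisson kernels $H^q_{A_n\setminus\eta}(\eta_\pm,\pm n)$, and $C^{(n)}$ depends only on $A_n$ and the boundary points.

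When one forms the ratio $p^{(n)}(\eta^\zeta)/p^{(n)}(\eta)$, the factor $C^{(n)}$ and the normalization coming from conditioning on visiting the origin cancel. What remains is the product of (i) the ratio of signed-loop factors, which by the standard loop-measure calculation simplifies to $F^q_{\eta^\zeta}/F^q_\eta = G^q_{A_n\setminus\eta}(\zeta,\zeta)$, and (ii) the ratio $|\det M^{(n)}(\eta^\zeta)/\det M^{(n)}(\eta)|$ of signed Poisson-kernel determinants. Standard monotone exhaustion and transience arguments in $\Z^2 \setminus \eta$ give $G^q_{A_n\setminus\eta}(\zeta,\zeta) \to G^q_{\Z^2\setminus\eta}(\zeta,\zeta)$ as $n\to\infty$, producing the first factor on the right-hand side of Proposition~\ref{main} (with the factor $1/4$ absorbed from the Fomin identity).

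The second factor is the heart of the argument. I would prove that after normalization by $\sqrt n$ the signed Poisson kernels converge to multiples of $v_\eta$ and $u_\eta$:
\[ \sqrt n \, H^q_{A_n\setminus\eta}(z, n) \longrightarrow c \, v_\eta(z), \qquad \sqrt n \, H^q_{A_n\setminus\eta}(z, -n) \longrightarrow c \, u_\eta(z), \]
uniformly on a neighborhood of $\eta\cup\{\zeta\}$. This is precisely what Proposition~\ref{limitexists} and the estimate $|v(z)-\tfrac{4}{\pi}\Im\sqrt z|\le c/|z|^{1/2}$ are designed to deliver: $v_\eta$ is the signed harmonic extension of $v$ off $\eta$ as in \eqref{veta}, and $\sqrt n$ times the signed Poisson kernel in the strip $A_n\setminus\eta$ with boundary data at $\pm n$ asymptotically solves the same Dirichlet problem for $L^q$. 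The hypothesis $\eta_1=1$ is used here to align $\eta$ with the orientation of the zipper along $\mathbb R_+$, which is what makes the signed Dirichlet problem actually produce $v_\eta$. In the determinantal ratio the $\sqrt n$ normalizations cancel; moreover, $H^q_{A_n\setminus\eta^\zeta}(\cdot,\pm n)$ differs from $H^q_{A_n\setminus\eta}(\cdot,\pm n)$ by a correction whose normalized limit is exactly the discrete Laplacian $L^q$ applied to $v_\eta$ or $u_\eta$ at the tip, so the matrix entries in the limit are those appearing in $D^q(\eta)$ and $D^q(\eta^\zeta)$.

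The main obstacle will be the uniform convergence of the normalized signed Poisson kernels to $v_\eta$ and $u_\eta$ near $\eta\cup\{\zeta\}$, with sufficient quantitative control to extract $L^q$ at the tips when passing to the limit inside the determinant. The analysis has to handle the delicate interaction of $\eta$ with the zipper near the origin, where $\eta$ branches away in both directions from $0$; this is where the signed-weight walks picked up in \cite{BLV} and the rate estimate for $v$ are combined with the strip random walk estimates underlying Theorem~\ref{Ltwo}. A secondary point is to verify that the limiting determinant $D^q(\eta)$ is nonzero so the ratio is well-defined. The absolute value in the statement reflects the sign ambiguity inherent in the choice of zipper orientation, which drops out of the physical probability on the left, and this is why the proposition is stated with $|\,\cdot\,|$.
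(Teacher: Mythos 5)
Your overall strategy matches the paper's: use the strip domains $A_n$, invoke the determinantal identity of \cite{BLV} for chordal LERW with signed weights, identify the Green's function ratio, and pass to the limit after showing that the (boundary) signed Poisson kernels in $A_n\setminus\eta$, properly normalized, converge to $u_\eta,v_\eta$ (this is exactly the content of Lemma~\ref{lem:jul26}, which rests on Lemma~\ref{limitpartial} and Proposition~\ref{limitexists}). The minor bookkeeping differences (the normalization is $n^{3/2}$, not $\sqrt{n}$, and the matrix entries are boundary Poisson kernels $L^q v_\eta^{(n)}(\eta_\pm)$, $L^q u_\eta^{(n)}(\eta_\pm)$ rather than interior Poisson kernels, which vanish on $\eta$) wash out in the ratio of determinants and do not affect the argument.

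There is, however, one genuine gap. You state the \cite{BLV} identity as a formula for the unconditional probability $\tilde p^{(n)}(\eta)$ that the chordal LERW traces $\eta$, but that is not what Theorem~3.1 of \cite{BLV} provides. The determinantal expression with signed weights is intrinsically unoriented: it computes the probability that the LERW covers $\eta$ in \emph{either} direction, i.e.\ it yields
\[
\frac{\hat p^{(n)}(\eta^\zeta)+\hat p^{(n)}((\eta^\zeta)^R)}{\hat p^{(n)}(\eta)+\hat p^{(n)}(\eta^R)}
= \frac{1}{4}\,G^q_{A_n\setminus\eta}(\zeta,\zeta)\,\Bigl|\frac{D^q_n(\eta^\zeta)}{D^q_n(\eta)}\Bigr|,
\]
where $\eta^R$ is the reversal. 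To convert this into the statement about $\hat p^{(n)}(\eta^\zeta)/\hat p^{(n)}(\eta)$ you need the additional fact that $\hat p^{(n)}(\eta)=\hat p^{(n)}(\eta^R)\bigl[1+O_\eta(n^{-\beta})\bigr]$ (and similarly for $\eta^\zeta$). This is not automatic: within a fixed finite domain $A_n$ with marked boundary points $\pm n$, a chordal LERW is \emph{not} reversible in law, and the approximate equality of a path and its reversal must be extracted from a separate Fomin-type/loop-measure computation, as the paper does. Without this step, the ratio you form does not reduce to the claimed transition probability, and the argument stalls exactly at the point where you write ``the factor $C^{(n)}$ and the normalization coming from conditioning on visiting the origin cancel.'' The rest of your outline --- the convergence $G^q_{A_n\setminus\eta}(\zeta,\zeta)\to G^q_{\Z^2\setminus\eta}(\zeta,\zeta)$ and the convergence of normalized boundary Poisson kernels to $L^q v_\eta, L^q u_\eta$ --- is sound and is carried out in the paper.
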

  
We need to define $G_{\Z^2\setminus\eta}^q(\zeta,\zeta), L^q$
  and the functions in the determinant.  This will be done in the next subsection. The proof of  Proposition \ref{main} requires a number of estimates (provided in Section \ref{est-sec}) and will be completed in this section's third and last subsection. One of the intermediate results,  Proposition \ref{limitexists}, is of independent interest and will be proven in the Appendix for the sake of fluidity of this section
.

 \subsection{Signed weights and discrete square root}\label{defs}

 We now introduce the key objects that appear in Proposition \ref{main}, namely $G^{q}, u, v,$ and $\dlaplace^q$. There is some arbitrariness in the definition of $u,v,q$ although the quantity $D^q(\eta)$
is independent of the choices. Our choices are natural given the particular embedding of the lattice into the complex plane.
Letting our domains $(A_n,a_n,b_n)$ be as in \eqref{A_n} is a  convenient
  choice that will allow us to make
  use of some symmetries.

 We will need to do some basic analysis
of the discrete Laplacian ``with zipper'' in $\Z^2$.   If $z,w \in \Z^2$ 
 let $p(z,w)$ be the usual simple random walk weight, i.e.,
\[p(z,w) = \frac{1}{4} \, {\bf 1}\{|z-w| = 1\}\] and $q(z,w)$ the weights obtained by
putting a branch cut or ``zipper'' directly below the positive real axis:
\[               q(k,k-i) = q(k-i,k) = - p(k,k-i) = -1/4, \;\;\;
  k=1,2,3,\ldots , \]
  and otherwise $q(z,w) = p(z,w)$.  
  One can think of $q$ as being the opposite of $p$ for all edges crossing the zipper $\{z\in\C:z=\frac{1}{2}-\frac{1}{2}i+x, x\in\R_+\}$ and otherwise the same as $p$.
  Given this, let us set some more notation.
  \begin{itemize}
  \item{In this section a walk is a sequence of nearest neighbor vertices on $\Z^2$ and a self-avoiding walk (SAW) is  a walk with no self-intersections. A loop is a walk starting and ending at the same vertex (with possible additional self-interesections.) The $p-$ and $q-$weights of a walk are the products of the weights of the traversed edges. The weight of the length-$0$ walk is by convention $1$.}
  \item{
  We write $\dlaplace$ and $\dlaplace^q$
  for the $p$- and $q$-Laplacians acting on functions defined on $\Z^{2}$:
  \[    \dlaplace f(z) =    \left[ \sum_{w: \, |w-z| =1} p(z,w) \, f(w) \right]
   - f(z) , \]
   \[  \dlaplace^q f(z) =  
       \left[ \sum_{w:\, |w-z| =1} q(z,w) \, f(w) \right]
   - f(z).  \]
   Note that the Laplacian of $f$ is also defined on $\Z^{2}$. We write $ \dlaplace_{x}f,  \dlaplace_{x}^{q}f$ for the Laplacians acting on the function $x \mapsto f(x,y)$.}
   \item{The $p$- and $q$-Green's functions for random walk in $A \subsetneq \Z^2$ are defined by
   \[
   G_{A}(z,w) = \sum_{\omega : z \leadsto w, \omega \subset A}p(\omega), \qquad G_{A}^{q}(z,w) = \sum_{\omega : z \leadsto w, \omega \subset A}q(\omega),
   \]
     where the sums are over all walks $\omega$
  starting at $z$, ending at $w$, otherwise in $A$.

   }
   \item{One can check that if $\eta$ is a SAW going through $0$, then
 \[  G_{\Z^2\setminus\eta}^q(w,w) = \sum_{\omega: w \leadsto
 w, \omega \subset \Z^2\setminus\eta} 4^{-|\omega|} \, (-1)^{J(\omega)},\]
 where the sum is over all random walk loops rooted at $w$ in $\Z^2\setminus\eta$ and
  $J(\omega)$ is the winding number of the loop about
 $0$ which is the same as the winding number about $\eta$, since the loops do not intersect it.
}

\item{
   If $A  \subsetneq \Z^2$, we define  $H_A(z,w), H_A^q(z,w)$ for $z \in \Z^2,
   w \in  \Z^2 \setminus A$ as the Poisson kernels corresponding to the respective Green's functions.   That is, if $z\in A$,
   
   \begin{equation}  \label{jan6.1}
    H_A(z,w) =  \! \sum_{\omega: z \leadsto w, \, \omega \setminus \{w\}  \subset  A } p(\omega), \quad
       H^q_A(z,w) = \!\sum_{\omega: z \leadsto w, \, \omega \setminus \{w\}  \subset  A } q(\omega) , 
       \end{equation}
  where the sums are over all walks $\omega$
  starting at $z$, ending at $w$,
  otherwise staying in $A$. 
  For $z \in \Z^2 \setminus A$, by definition $H_{A}(z,w) = H_{A}^q(z,w) =  \mathbf{1}_w(z)$ (a Dirac mass at $w$). More generally, if $E \subset \partial A$, then we  write
 $$H_{A}(z,E)=\sum_{w\in E}H_{A}(z,w)$$
 for the discrete harmonic measure of $E$ in $A$.
  
  }
  \item{   Note that for $w\in\Z^2\setminus A, h(z)   := H^q_{A}(z,w) $
   is the unique bounded function on $\Z^2$ with
  \[                  h(z) =  \mathbf{1}_w(z) , \;\;\;\; z \in \Z^2 \setminus A, \]
  \[       \dlaplace^q h(z) = 0 , \;\;\;\; z \in  A . \]
  Another way of looking at $h$ is as follows.  Let $S_n$ denote a simple random walk and let
  $Q_n$ denote the number of times that the walk has crossed the branch cut by time
  $n$.  If $\tau$ is the hitting time of $\Z^2 \setminus A$, then
  \[        h(z) = \E^z\left[ (-1)^{Q_\tau} \, \mathbf{1}\{S_\tau = w\} \right].\]
We can also think about this in terms of a two-cover of $\Z^{2}$.}
  \item{
       We also define Poisson kernels for two boundary points: If $z,w $ are distinct points in $\p A$, we
  define the boundary Poisson kernels,
  \[   H_{\p A}(z,w) =\dlaplace_z H_A(z,w) = \dlaplace_w H_{A}(z,w), \]
  \[
     H_{\p A}^q(z,w) = \dlaplace^q _zH_A(z,w)= \dlaplace^q_{w} H_A(z,w).\]
 We  
 can also write   $  H_{\p A}(z,w),  H_{\p A}^q(z,w)$
 in a form analogous to that in \eqref{jan6.1}.
}

\item{

We will in places need to compare discrete Poisson kernels with their continuous counterparts. For a domain $A\subset \C$, if $w\in A$, $z\in\partial A$, and $\p A$ is locally analytic at $z$, the Poisson kernel $h_A(w,z)$ is the density of harmonic measure with respect to Lebesgue measure.
As in the discrete case, we will write 
$$h_A(w, E)= \int_{E} h_A(w,z) \, |dz|$$
for domains $A$ with a Poisson kernel and measurable $E \subset \partial A$.
}

\item{ 
 
  We define \[\Z_+ = \{0,1,2,\ldots\}, \qquad \Z_+^{*} = \{1,2,\ldots\}\] and 
  \[\Z_-=\{\ldots, -2,-1,0\}, \qquad  \Z_-^{*} = \{\ldots, -2,-1\}.\] 
  
  }
  
\item{ For $R\in\R_+$, we let
  \begin{equation}\label{discretecircle}
D_R = \{z\in\mathbb{Z}^2:|z|<R\}, \quad C_R=\partial D_R
\end{equation}
be the discrete disk and circle, respectively, of radius $R$. Also let
  \begin{equation}\label{U}
U_R  = \{x+iy \in \Z^{2}: |x| < R, \, |y| < R \}
\end{equation}
be the discrete square centered at $0$ of side-length $2R$ with sides parallel to the axes.
}
 \end{itemize}
 
 \subsection{Random walk estimates}\label{est-sec}
 
 This section contains a number of useful estimates for random walks with or without signed weights. 

The first three estimates are for the Poisson kernel for walks with signed weights and exploit cancellations implied by the signed weights.
 \begin{lemma}  \label{prop.kias}
There exists $c < \infty$ such that the following is true.
Let 
$K \subset \Z^2$
be a finite set including the origin.  Then for all
$z$,
\[          \sum_{w \in  K}  |H_{\Z^2\setminus K }^q(z,w)| 
       \leq c \, \sqrt{\frac{\diam(K)}{|z|}} \]
 \end{lemma}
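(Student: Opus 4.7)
The plan is to reduce to the regime $|z|\gg\diam(K)$ via a trivial bound and then, using the strong Markov property, reduce further to a signed harmonic measure estimate on the boundary of a square enclosing $K$, which I then handle using the $\sqrt{z}$ double cover forced by the zipper.

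For the trivial regime, since $|q(e)|\leq p(e)$ edgewise, $|H^q_{\Z^{2}\setminus K}(z,w)|\leq H_{\Z^{2}\setminus K}(z,w)$, and recurrence gives $\sum_{w\in K} H_{\Z^{2}\setminus K}(z,w)\leq 1$. Hence $\sum_{w\in K}|H^q_{\Z^{2}\setminus K}(z,w)|\leq 1$, which already proves the lemma whenever $|z|\leq c_{0}\diam(K)$ for any fixed $c_{0}$.

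Now set $R=\diam(K)$ and assume $|z|\geq 4R$, so $K\subset U_R$ while $z\notin\bar U_{2R}$. Any walk from $z$ to a point of $K$ first enters $\bar U_{2R}$ at some $y\in\partial U_{2R}$. Since $q$-weights multiply along concatenations, decomposing at this first entry gives
\[
H^q_{\Z^{2}\setminus K}(z,w)\;=\;\sum_{y\in\partial U_{2R}}H^q_{\Z^{2}\setminus\bar U_{2R}}(z,y)\cdot H^q_{\Z^{2}\setminus K}(y,w).
\]
Summing absolute values over $w\in K$ and bounding the $w$-sum of the inner factor by $1$ uniformly in $y$ (by the same trivial bound above), the lemma reduces to
\[
\sum_{y\in\partial U_{2R}}\bigl|H^q_{\Z^{2}\setminus\bar U_{2R}}(z,y)\bigr|\;\leq\; c\sqrt{R/|z|}.
\]

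For this remaining estimate, the idea is to pass to the double cover $\tilde W\to W:=\Z^{2}\setminus\bar U_{2R}$ corresponding to the $-1$ monodromy around $\bar U_{2R}$: the $q$-walk on $W$ lifts to an unsigned walk on $\tilde W$, and writing $\tilde z$ for a lift of $z$ and $y^{\pm}$ for the two lifts of $y$,
\[
H^q_{W}(z,y)\;=\;H_{\tilde W}(\tilde z, y^{+})-H_{\tilde W}(\tilde z,y^{-}).
\]
Under $z\mapsto\sqrt{z}$ the continuous analog of $\tilde W$ becomes the exterior of a disk-like region of radius $\asymp\sqrt{R}$, and expanding the two continuous Poisson kernels at antipodal boundary points for $|\tilde z|\gg\sqrt{R}$ yields the pointwise bound $|h^q_{W}(z,y)|\lesssim(R|z|)^{-1/2}$; summing over $|\partial U_{2R}|\asymp R$ sites then produces the desired $\sqrt{R/|z|}$. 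The hard part is the discrete-to-continuous transfer with a sharp enough error, which can be done either via standard Poisson kernel comparison estimates (strong coupling of SRW and BM) applied on $\tilde W$ away from the ramification point, or at the discrete level by using the functions $u,v$ of Section~\ref{defs} as comparison $q$-harmonic functions with the maximum principle; the corners of $\partial U_{2R}$ and the zipper require care but do not affect the order of the bound.
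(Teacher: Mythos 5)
Your first reduction is fine and is essentially the same move the paper makes: decompose at the first entry to a simple enclosing set (the paper uses $D_n$, you use $\bar U_{2R}$), bound the inner sum by $1$ via $|H^q|\le H\le 1$, and reduce the lemma to a bound on $\sum_{y}|H^q_{\Z^2\setminus\bar U_{2R}}(z,y)|$. The double-cover identity $H^q_W(z,y)=H_{\tilde W}(\tilde z,y^+)-H_{\tilde W}(\tilde z,y^-)$ is also correct as stated.

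The gap is in the step you yourself flag as ``the hard part.'' You reduce the lemma to a signed-harmonic-measure estimate and then sketch a discrete-to-continuous comparison on the double cover, but you do not carry it out, and this is genuinely where the work lies. The proposed continuous expansion gives the right heuristic order $(R|z|)^{-1/2}$ per boundary site, but transferring this to the lattice with a usable error near the ramification point (and near the corners of $\partial U_{2R}$ and the zipper) is not routine; note that even establishing the needed rate estimates for the discrete $\sqrt z$-type functions $u,v$ occupies the whole of Proposition~\ref{limitexists} and its Appendix in this paper, so invoking them here as ``comparison $q$-harmonic functions'' is circular in spirit if not in letter. The paper sidesteps all of this with an elementary path-reflection cancellation: with $l=\{ki:k\ge n\}$, $l'=-l$, $T,T'$ their hitting times and $\tau$ the hitting time of $D_n$, any path with $T<T'<\tau$ is paired (by reflecting $S[T,T']$ across the imaginary axis) with a path of opposite $q$-weight, so such paths cancel exactly in $\sum_{z'}|H^q_{\Z^2\setminus D_n}(z,z')|$; the same holds for $T'<T<\tau$. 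Hence only paths with $\tau<\max\{T,T'\}$ contribute, giving
\[
\sum_{z'\in D_n}\bigl|H^q_{\Z^2\setminus D_n}(z,z')\bigr|\le \Prob^z\{\tau<T\}+\Prob^z\{\tau<T'\}\le c\sqrt{n/|z|}
\]
by the Beurling estimate. This is short, purely discrete, and needs no transfer to the continuum. If you want to keep the double-cover route you would need to actually prove the uniform Poisson-kernel comparison on $\tilde W$ with an $O(1)$ multiplicative error at scale $\sqrt R$, which is a substantial piece of analysis; as written, the argument is a plan rather than a proof.
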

 
 \begin{proof}
 It suffices to prove the result in the case where $K = D_n$ with $n \in \mathbb{N}$.  Indeed, if
$ |z| \geq n$ and $K \subset \{|z| \le n\}$, then 
\[     H^q_{\Z^2\setminus K }(z,w) = \sum_{z' \in D_n}
    H^q_{\Z^2\setminus D_n}(z,z') \, H^q_{\Z^2\setminus K}(z',w).\]
    and hence, since $\sum_{w \in K} |H^q_{\Z^2\setminus K}(z',w)|\leq \sum_{w \in K}H_{\Z^2\setminus K}(z',w)\leq 1$,
  \begin{eqnarray*} 
  \sum_{w \in K}
   |H_{\Z^2\setminus K}^q(z,w)|  & \leq & 
       \sum_{z' \in D_n} \sum_{w \in K}
          |  H_{\Z^2\setminus D_n}^q(z,z') | \, |H_{\Z^2\setminus K}^q(z',w)| \\
       & \leq & 
    \sum_{z' \in D_n}  |  H_{\Z^2\setminus D_n}^q(z,z') | .
    \end{eqnarray*}
    Let $l, l'$ be the half-infinite lines
    \[   l = \{ ki: k \geq n \}, \;\;\;\;
       l' = - l =  \{- k i: k \geq n \}.\]
     Let $\tau = \min\{t: S_t \in D_n\}$,
     $T = \min\{t: S_t \in l\}, T' = \min\{t: S_t
      \in l'\}$.   On the event $\{T < T' < \tau\}$
     we can give a bijection on paths by switching
     $S[T,T']$ with its reflection about the
     imaginary axis.  The measure of the reflected
     path is the negative of the measure of the
     first path and hence these paths cancel.
     There is a similar bijection on
     $\{T' < T < \tau\}$.  Therefore, by the Beurling estimate,
     \begin{eqnarray*}     \sum_{z' \in D_n}
        |H^q_{\Z^2\setminus D_n}(z,z')| &  \leq &
          \Prob^z\{\tau < \max\{T,T'\}\} \\
         & \leq & \Prob^z\{\tau < T\}
            + \Prob^z\{ \tau < T'\}
              \leq  c \, \sqrt{\frac n{|z|}}.\end{eqnarray*}

    \end{proof}
    
      \begin{lemma}  \label{qBeurling}
There exists $c < \infty$ such that the following is true.
Let 
$K \subset \Z^2$
be a finite set including the origin, $z\in\Z^2\setminus K$ and let $r = \max\{|z|, \diam(K)\} $
.  Then 
\[          \sum_{x \in  C_{ar}}  |H_{D_{ar}\setminus K }^q(z,x)| 
       \leq c \, a^{-1/2}. \]
 \end{lemma}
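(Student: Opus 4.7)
The plan is to first reduce to the case $K = \{0\}$ and then adapt the reflection argument of Lemma~\ref{prop.kias}. To this end, decomposing a walk $\omega : z \to x$ in $D_{ar} \setminus \{0\}$ at its first visit (if any) to $K \setminus \{0\}$ yields
\[
H^q_{D_{ar}\setminus\{0\}}(z,x) = H^q_{D_{ar}\setminus K}(z,x) + \sum_{y\in K\setminus\{0\}} H^q_{D_{ar}\setminus K}(z,y)\, H^q_{D_{ar}\setminus\{0\}}(y,x).
\]
Summing $|\cdot|$ over $x\in C_{ar}$ and using the trivial bound $\sum_{y} |H^q_{D_{ar}\setminus K}(z,y)| \leq \Prob^z[\text{exit via } K \setminus \{0\}] \leq 1$, the lemma would reduce to
\[
\sum_{x\in C_{ar}} |H^q_{D_{ar}\setminus\{0\}}(w,x)| \leq c\,a^{-1/2}, \qquad w\in D_r\setminus\{0\}.
\]

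For this reduced estimate I would mimic the reflection argument in Lemma~\ref{prop.kias}. Let $S$ be a simple random walk from $w$, set $\sigma_{ar} = \min\{t : |S_t| \geq ar\}$, and introduce the half-infinite imaginary-axis segments $l = \{ki : k \geq 1\}$, $l' = -l$, with first-hit times $T$, $T'$. On $\{T < T' < \sigma_{ar}\}$ (and symmetrically on $\{T' < T < \sigma_{ar}\}$), pair each contributing walk with the walk obtained by reflecting the subpath $S[T,T']$ about the imaginary axis. The parity computation is the key point: because the walk avoids $0$, the subpath never uses the edge $(0,-i)$; because it starts at some $k_0 i$ with $k_0\geq 1$ and ends at some $-k_1 i$ with $k_1\geq 1$, it must cross the line $\{\Im z = -1/2\}$ an odd number of times, all at zipper or mirror-zipper edges. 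Since imaginary-axis reflection exchanges these two families of edges, the parity of zipper crossings changes, so the reflected walk carries $q$-weight of opposite sign and the pair cancels.

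The unpaired walks are those with $\sigma_{ar} < \max\{T,T'\}$. Using the pointwise bound $|H^q| \leq H$, their net contribution to $\sum_x |H^q_{D_{ar}\setminus\{0\}}(w,x)|$ is at most
\[
\Prob^w[\sigma_{ar} < T] + \Prob^w[\sigma_{ar} < T'] \leq c\sqrt{|w|/(ar)} \leq c\,a^{-1/2},
\]
by the discrete Beurling estimate applied to the non-negative (resp.\ non-positive) imaginary axis, viewed as a connected half-line from $0$ to infinity. The hardest step will be the parity accounting in the reflection: one must use the constraint that the walk avoids $0$ (built into $D_{ar} \setminus \{0\}$, which is why the reduction targets $K = \{0\}$ rather than $K = \emptyset$) to guarantee that the subpath's parity of zipper crossings truly flips upon reflection, so that paired walks indeed have $q$-weights of opposite sign.
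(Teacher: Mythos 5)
Your proof is correct, but it takes a genuinely different route from the paper's. The paper uses a \emph{last-exit} decomposition at the circle $C_r$: since $K\subset\{|w|\le r\}$, after its last visit to $C_r$ a walk from $z$ to $C_{ar}$ lives in $D_{ar}\setminus D_r$ and never sees $K$, so the estimate reduces to bounding $\sum_{x}|H^q_{D_{ar}\setminus D_r}(w,x)|$ for $w\in C_r$, to which the reflection argument of Lemma~\ref{prop.kias} is applied. You instead do a \emph{first-visit} decomposition at $K\setminus\{0\}$, reducing to the single-obstacle case $K=\{0\}$ and then running the reflection argument directly in the slit disk $D_{ar}\setminus\{0\}$. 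Both reductions pay a constant factor for the prefix of the walk (via $\sum_y|H^q(z,y)|\le\sum_y H(z,y)\le 1$ in your case), and both end in the same symmetry/cancellation mechanism. A real plus of your write-up is that you make the parity bookkeeping explicit, which the paper leaves implicit behind ``the same argument as in Lemma~\ref{prop.kias}'': you verify that (i) the subpath $S[T,T']$ runs from $k_0 i$ with $k_0\ge 1$ to $-k_1 i$ with $k_1\ge 1$ and hence crosses $\{\Im z=-1/2\}$ an odd number of times, (ii) because the walk avoids $0$ the edge $(0,-i)$ is never used, so every such crossing is a zipper or mirror-zipper edge, and (iii) reflection about the imaginary axis swaps those two edge families, so the zipper-crossing parity flips and $q(\omega)+q(\tilde\omega)=0$. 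Since the reflection fixes $S[0,T]$ and $S[T',\sigma_{ar}]$, it preserves the exit point $x$, so the cancellation holds pointwise in $x$ and the sum over $x$ is controlled by the unpaired event $\{\sigma_{ar}<T\}\cup\{\sigma_{ar}<T'\}$, as you state.

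Two small tidying points. First, in the reduction the sum in your trivial bound should be written over $y\in K\setminus\{0\}$, and the reduced estimate has to hold for all $w$ with $|w|\le r$, $w\neq 0$ (the relevant $w$ are $z$ itself and each $y\in K\setminus\{0\}$, which may lie on or near $C_r$, not only in the open disk $D_r$); this costs nothing since the Beurling bound $\Prob^w\{\sigma_{ar}<\tau\}\le c\sqrt{|w|/(ar)}\le c\,a^{-1/2}$, with $\tau$ the hitting time of the non-negative (resp.\ non-positive) imaginary half-axis, is uniform over such $w$. Second, the phrase ``Using the pointwise bound $|H^q|\le H$'' is slightly misleading; what you actually use is that, after cancellation, $|H^q_{D_{ar}\setminus\{0\}}(w,x)|$ is bounded by the $p$-measure of the unpaired paths ending at $x$, and then you sum over $x$. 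The substance is right either way.
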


\begin{proof}
Let $\psi = \inf\{k\geq 0: S(k)\in C_{ar}\}$ and $\lambda =\sup\{k\leq \psi: S(k)\in C_r\}$ and write $\mathbf{Q}^z$ for the $q$-measure of paths started at $z$. Then for $z\in\Z^2\setminus K, x \in  C_{ar}$,
\begin{eqnarray*}
|H^q_{D_{ar}\setminus K}(z,x)| & = & 
|\sum_{w\in C_r}\sum_{k\geq 1}\mathbf{Q}^z(\lambda = k, S(\lambda)=w, S(\psi)=x)|\\
& = & 
|\sum_{w\in C_r}\sum_{k\geq 1}\mathbf{Q}^z(\lambda = k, S(k)=w, S[k,\psi]\not\in D_r, S(\psi)=x)|\\
& \leq & 
\sup_{w\in C_r}|H^q_{D_{ar}\setminus D_r}(w,x)|.\\
\end{eqnarray*}
So, writing for a set $A\subset \Z^2, \tau_A=\inf\{k\geq 0: S(k)\in A\}$ and using the same argument as in Lemma \ref{prop.kias} for the final inequality, 
$$\sum_{x \in  C_{ar}}  |H_{D_{ar}\setminus K }^q(z,x)| \leq 
\sup_{w\in C_r}P^w(\tau_{C_{ar}}<\tau_{\Z_+^*})\leq ca^{-1/2}.$$
\end{proof}

  \begin{lemma}\label{withoutAn}  There exists $c < \infty$
  such that the following is true. Let 
  $K$
be a finite subset including the origin and suppose $A \subsetneq
  \Z^2\setminus K$. Then for $z\in A $, 
  \[  \sum_{w \in K}
    | H_{\Z^2\setminus K}^q(z,w) -
       H_A^q(z,w)|   \leq c \,\sqrt{ \frac{\diam(K)}R}
        \, \sum_{z' \in \p A \setminus
       K}
                   H_A(z,z'),\]
  where       $R=  \min\{|z'|: z' \in \p A \setminus K\}.$
  \end{lemma}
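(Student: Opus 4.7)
The plan is to decompose paths by their first exit from $A$. A walk $\omega$ from $z$ to $w \in K$ that otherwise stays in $\Z^{2}\setminus K$ either stays entirely in $A$ (contributing to $H^q_A(z,w)$), or it first leaves $A$ at some vertex $z' \in \partial A \setminus K$ (not in $K$, since otherwise the walk would have terminated inside $K$) and then continues in $\Z^2 \setminus K$ until reaching $w$. Because the $q$-weight factorizes along concatenated paths, I would first establish the identity
\[
H^q_{\Z^2\setminus K}(z,w) - H^q_A(z,w) = \sum_{z' \in \partial A \setminus K} H^q_A(z,z')\, H^q_{\Z^2\setminus K}(z',w),
\]
valid for every $z \in A$ and $w \in K$. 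This is the natural strong-Markov/first-exit splitting; signed weights cause no trouble since both sides are absolutely convergent sums over the same set of paths, and each path admits a unique first-exit decomposition.

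Then I would apply the triangle inequality, swap the order of summation, and use the trivial bound $|q(e)|=p(e)$ to dominate $|H^q_A(z,z')|$ by the unsigned kernel $H_A(z,z')$. This gives
\[
\sum_{w \in K} |H^q_{\Z^2\setminus K}(z,w) - H^q_A(z,w)| \leq \sum_{z' \in \partial A \setminus K} H_A(z,z') \cdot \sum_{w \in K} |H^q_{\Z^2\setminus K}(z',w)|.
\]
The inner sum is exactly what Lemma~\ref{prop.kias} controls. Applied at the starting point $z'$, it gives $\sum_{w \in K} |H^q_{\Z^2\setminus K}(z',w)| \leq c\sqrt{\diam(K)/|z'|}$, and since $|z'| \geq R$ for every $z' \in \partial A \setminus K$, this is at most $c\sqrt{\diam(K)/R}$. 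Pulling this uniform bound out of the sum over $z'$ yields the desired estimate.

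There are no serious obstacles here. The only small subtlety is the bookkeeping in the first-exit identity when $w \in K \cap \partial A$ (in which case $H^q_A(z,w)$ already accounts for paths ending directly at $w$), but this is handled transparently by interpreting $H^q_A(z,\cdot)$ as vanishing off $\partial A$. The key reason the argument works is that the signed cancellations are exploited only once, inside Lemma~\ref{prop.kias}; everywhere else it suffices to use absolute values and compare with the ordinary simple random walk.
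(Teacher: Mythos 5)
Your proof is correct and follows essentially the same route as the paper: the first-exit decomposition identity $H^q_{\Z^2\setminus K}(z,w) = H^q_A(z,w) + \sum_{z'\in\partial A\setminus K} H^q_A(z,z')H^q_{\Z^2\setminus K}(z',w)$, followed by the triangle inequality, the bound $|H^q_A(z,z')|\le H_A(z,z')$, and Lemma~\ref{prop.kias} applied at $z'$ together with $|z'|\ge R$. Nothing is missing.
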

  
  \begin{proof}
  If $z\in A, w\in K$,
  \[    H_{\Z^2\setminus K}^q(z,w) =
       H_A^q(z,w) + \sum_{z' \in \p A \setminus K}
               H_A^q(z,z') \, H_{\Z^2\setminus K}^q(z',w).\]
  Therefore, by Lemma \ref{prop.kias},
  \begin{eqnarray*}
    \sum_{w \in  K}
    | H_{\Z^2\setminus K}^q(z,w) -
       H_A^q(z,w)| & \leq & \sum_{z' \in \p A \setminus
        K}  \sum_{w \in  K}  |H_A^q(z,z')|
        \, |H_{\Z^2\setminus K}^q(z',w)|\\
     & \leq & c \, \sqrt{\frac{\diam( K)}{R}} \, \sum_{z' \in \p A \setminus
        K}
                   H_A(z,z').
       \end{eqnarray*}      
    \end{proof}

Recall the stopping times for simple random walk $S_{n}$ on $\Z^{2}$ defined in the Introduction: \[\tau_+ = \min\{j \geq 0: S_j \in \Z_+\}\] and \[\sigma_R = \min\{j
   \geq 0 : |S_j| \geq R\}.\]

   We define the important function
   \[ v(z) =   \lim_{R \rightarrow \infty} R^{1/2}
    \, \Prob^z\{ \sigma_R < \tau_+\}, \]
    which, as we will see, gives a probabilistic definition of a discrete version of a constant times $\Im \sqrt{z}$. (See also \cite{GHP} for a related construction.)
    Of course, one has to check that the limit exists. This is most certainly known but we choose to give a self-contained proof
    which contains an error bound. This is done in Proposition \ref{limitexists}, the proof of which is given in the Appendix. We shall also need the following sequence of lemmas. The first gives a coupling of  Brownian motion and random walk given by the KMT approximation (see \cite{kmt1} and \cite{kmt2}) in a form which follows from Lemma A.5 and Theorem A.3 in \cite{BJK}:
 
\begin{lemma}\label{kmt}
There exists $c \in (0,\infty)$ so that for each $R < \infty$, there is a probability space on which a planar simple random walk started at $z$ and planar standard Brownian motion started at $z'$ with $|z-z'|\leq 2$ can be constructed in such a way that 
$$\Prob \left\{\sup_{0\leq t\leq T_R }|B_t-S_{2t}|>c\log R \right\}\leq cR^{-3},$$
where $T_R=\inf\{t\geq 0:|B_t|\geq R\}$.
\end{lemma}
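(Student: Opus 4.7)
The plan is to deduce the stated form directly from the KMT coupling results recalled in Lemma~A.5 and Theorem~A.3 of \cite{BJK}. Those references supply, on a common probability space, a planar simple random walk $(S_k)_{k\geq 0}$ starting at $z$ and a planar Brownian motion $(B_t)_{t\geq 0}$ starting at $z'$ with $|z-z'|\leq 2$, together with the discrete-time estimate
\[
\Prob\left\{\max_{0\leq k\leq n}|S_k - B_{k/2}| > c_0\log n\right\} \leq c_0\, n^{-\beta}
\]
for every $n\geq 2$, where $\beta$ can be made as large as we wish by enlarging $c_0$. The factor $1/2$ in the time change reflects the fact that planar SRW has one-step covariance $\tfrac{1}{2}I$, whereas standard two-dimensional Brownian motion has covariance $I$.

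From this, a short interpolation argument (linearly interpolate $S$ between integer times, bound the continuous-time oscillation of $B$ on mesh $1$ via the L\'evy modulus) yields
\[
\Prob\left\{\sup_{0\leq t\leq n/2}|B_t - S_{2t}| > c_1\log n\right\} \leq c_1\, n^{-\beta},
\]
possibly after a further enlargement of $c_1$. I would then set $n = \lceil c_2 R^2\log R\rceil$ for a sufficiently large constant $c_2$. A standard tail bound for the exit time of Brownian motion from the disk of radius $R$ (for example by comparing with a one-dimensional projection and iterating $\Prob\{T_R > cR^2\}\leq 1/2$) gives $\Prob\{T_R > n/2\}\leq R^{-3}$. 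On the complementary event,
\[
\sup_{0\leq t\leq T_R}|B_t - S_{2t}| \leq \sup_{0\leq t\leq n/2}|B_t - S_{2t}|,
\]
so choosing $\beta$ large enough (say $\beta = 4$) in the KMT estimate yields the displayed bound $cR^{-3}$, after absorbing the logarithmic factors from $n$ into the constant in front of $\log R$.

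The only real step is bookkeeping---matching the polynomial exponent and the logarithmic constant in the \cite{BJK} formulation to the form used here, and absorbing the $O(1)$ discrepancy between the starting points $z$ and $z'$ into $c\log R$. There is no substantive obstacle, since the analytical content of the coupling is already contained in \cite{kmt1, kmt2} and has been repackaged for the planar case in \cite{BJK}.
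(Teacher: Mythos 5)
The paper does not actually supply a proof of this lemma; it is stated as a form that "follows from Lemma A.5 and Theorem A.3 in \cite{BJK}," i.e., the derivation is delegated to the citation. Your sketch is a correct and reasonable account of precisely that derivation: use the strong (KMT-type) coupling with a polynomial tail of arbitrarily high order for $\max_{k\le n}|S_k - B_{k/2}|$, pass to continuous time by interpolation and a modulus-of-continuity bound, take $n\asymp R^2\log R$, control $\Prob\{T_R > n/2\}$ by the standard exponential tail for the exit time of Brownian motion from a disk, and choose the polynomial order large enough (here $\ge 3$) to get the stated bound. This is the same approach the paper implicitly invokes, so there is nothing further to compare.
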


We now show in a sequence of three lemmas that the probability that random walk exits the slit disk at a specific point, given that it exits on the circle, is about the same for all points on a mesoscopic scale. 
It turns out that the partial results in Lemmas \ref{lem:lawler-limic} and \ref{origintointeriorrw} are easier to prove in the slit square $
U_R^- = U_R \setminus [0, \ldots, R]$, where $U_R$ is as in \eqref{U}.  This is because the slit square can be split into a finite number of rectangles and the (discrete) Poisson kernel for a rectangle can be given explicitly; this idea is used in
Section 5 of \cite{BLV} and we state some of
the main results here.

\begin{lemma}\label{lem:lawler-limic}
If $z \in \p U_R^-$, let $d_R(z)$
be the distance from $z$ to the set $\{R, \pm R \pm iR\}$.
Then
\[   H_{\p U_R^{-}}(0,z) \asymp
 H_{\p U_R^{-}}(0,\p U_R)\,\frac{d_R(z)}
   {R^2}\asymp  
    \frac{1}{R^{1/2} }\, \frac{d_R(z)}
   {R^2}. \]
   More generally, if $m \leq R/2$
   and $w \in \p U_m,$
   \[  
 H_{  U_R^{- }}(w,z) \asymp  H_{  U_R^{- }}(w,\p U_R) \,\frac{d_R(z)}
   {R^2} \asymp  \frac{ |w-m|}{ m^{1/2}   
   R^{1/2}}
    \,   \frac{d_R(z)}
   {R^2}. \;\;\;\;  
     \] 
 
   \end{lemma}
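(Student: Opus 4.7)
The plan is to leverage two ingredients: (i) the explicit Poisson kernel for rectangles recalled from Section~5 of \cite{BLV}, which handles the slit tip on the microscopic scale, and (ii) a KMT coupling to a Brownian motion in the (unslit) square on the macroscopic scale, where the slit becomes negligible. The proof will proceed by splitting the walk at a mesoscopic circle $\partial U_{R/2}$ via the strong Markov property.

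First I would establish the total mass estimate $H_{\partial U_R^{-}}(0, \partial U_R) \asymp R^{-1/2}$. Since $0 \in \partial U_R^{-}$, the walk steps to a neighbor and must reach $\partial U_R$ before returning to the slit $\{0,\ldots,R\}$. Using reflection symmetry about $\R$, the upper half of $U_R^{-}$ is a rectangle with a distinguished boundary corner at $0$, and the escape probability to the outer boundary can be read off from the explicit rectangle Poisson kernel in \cite{BLV}. This gives the $R^{-1/2}$ factor, which matches the Beurling/conformal heuristic $\Im\sqrt{z}$ at the tip. For $w \in \partial U_m$ with $m \le R/2$, the analogous estimate
\[ H_{U_R^{-}}(w, \partial U_R) \asymp \frac{|w-m|}{m^{1/2}\, R^{1/2}} \]
follows by splitting at $\partial U_{2m}$: the factor $|w-m|/m^{1/2}$ is the probability of exiting $U_{2m}^{-}$ on $\partial U_{2m}$ rather than on the slit, and again comes from the explicit rectangle formula (the numerator $|w-m|$ reflects that the Brownian limit $\Im\sqrt{z}$ vanishes linearly in the distance to the slit tip in a fixed compact region, while the $m^{-1/2}$ comes from the normalization as we rescale to size $m$).

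Next I would show the shape estimate: conditional on the walk reaching $\partial U_R$, the exit distribution on $\partial U_R$ is $\asymp d_R(z)/R^2$, uniformly in the (distant) starting point. Using the strong Markov property at $\partial U_{R/2}$,
\[ H_{U_R^{-}}(w, z) \;=\; \sum_{y \in \partial U_{R/2}} H_{U_R^{-}}(w, y)\, H_{U_R}(y, z). \]
On the scale $R/2$ the slit is microscopic, so by Lemma~\ref{kmt} I would couple the walk to a planar Brownian motion, showing that $H_{U_R}(y,z)$ agrees, up to a multiplicative factor $1 + O(\log R / R^{1/2})$, with the Brownian Poisson kernel of the continuous square $(-R,R)^2$ started at $y$. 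For $y \in \partial U_{R/2}$, this continuous Poisson kernel is uniformly $\asymp d_R(z)/R^2$, with the $d_R(z)$ factor arising from the linear vanishing of harmonic measure at the four corners $\{R, \pm R \pm iR\}$. Summing over $y$ and pulling out the total mass then gives both stated asymptotics.

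The main obstacle is the tip estimate, namely extracting the exact $R^{-1/2}$ and $|w-m|/m^{1/2}$ rates and the uniformity in the starting point from the rectangle formula in \cite{BLV}; the KMT step on the macroscopic scale is then routine. A secondary technicality is the boundary regularity needed for the Brownian Poisson kernel estimate near the corners, where the continuous Poisson kernel itself diverges (or vanishes, depending on the corner's interior angle relative to the sampled point). This I would handle by restricting the coupling argument to $y \in \partial U_{R/2}$, which stays uniformly far from the corners, so only the well-behaved interior of the square boundary matters for the KMT comparison.
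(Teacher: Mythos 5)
Your overall organization — separate a tip estimate from a macroscopic shape estimate and glue them with a Markov decomposition at a mesoscopic scale — is the same as the paper's. But there is a genuine flaw in the macroscopic step. The slit in $U_R^- = U_R \setminus [0, \ldots, R]$ runs all the way from $0$ to $R$; it is \emph{not} microscopic at scale $R/2$, it has length $R$ and touches $\partial U_R$ at the point $R$. Consequently you cannot compare $H_{U_R}( y, z)$, nor any Brownian analogue of it, with the Poisson kernel of the \emph{unslit} square $(-R,R)^2$. The set $\{R, \pm R \pm iR\}$ in the definition of $d_R(z)$ contains $R$ precisely because the slit creates an effective fifth corner there: the correct kernel vanishes like $d_R(z)/R^2$ as $z \to R$, whereas the Poisson kernel of the unslit square is bounded away from zero near the midpoint of a side. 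So the KMT comparison you propose would give the wrong asymptotics in the part of $\partial U_R$ near $R$, and the claimed uniform $\asymp d_R(z)/R^2$ estimate would fail there.

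There are two smaller issues as well. Your Markov identity $H_{U_R^-}(w,z) = \sum_{y \in \partial U_{R/2}} H_{U_R^-}(w,y) H_{U_R}(y,z)$ is not well posed: $y \in \partial U_{R/2}$ is an interior point of $U_R^-$, so $H_{U_R^-}(w,y)$ is not a Poisson kernel, and the second factor drops the boundary condition at $\partial U_{R/2}$. The paper instead uses a first-hit/last-exit decomposition across $\partial U_N$ with $N = \lfloor 3R/4 \rfloor$, producing a factor $H_{U_N^-}(w,\zeta)\, G_{U_R^-}(\zeta,\xi)\, H_{\p(U_R^- \setminus \p U_N^-)}(z,\xi)$, and keeps the slit in every factor. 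It then estimates the macroscopic factor by comparison with the \emph{explicit} Poisson kernel for a rectangle (finite Fourier series, \cite[Section 8.1]{LL}) — the slit square, reflected through its slit, is essentially a rectangle, so the slit is handled exactly rather than wished away. For the total-mass estimate $H_{U_R^-}(w, \partial U_R) \asymp |w-m|/(m^{1/2}R^{1/2})$ the paper simply cites \cite[Proposition 5.3.2]{LL}; your plan to split at $\partial U_{2m}$ and use the rectangle formula is consistent with that but should be phrased more carefully since $|w - m|/m^{1/2}$ is not itself a probability. To repair your proof you would need to replace the unslit square by a domain that still sees the slit on the macroscopic annulus — as the paper does — or reflect through the slit first and then argue in the resulting rectangle.
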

   
 \begin{proof}  We sketch the proof since similar estimates have appeared in several places.
  For the proof that  $$ H_{  U_R^{- }}(w,\p U_R)
  \asymp \frac{|w-m|}{m^{1/2}R^{1/2}}$$ if $w \in \p U_m$, 
 see \cite[Proposition 5.3.2]{LL}. If $N =  \lfloor
 3R/4 \rfloor $, then we can write
 \[   H_{  U_R^{- }}(w,z) 
   = \sum_{\zeta,\xi \in \p U_N}
      H_{U_N^-}(w,\zeta) \, G_{U_R^-}(\zeta,\xi)
       \, H_{\p(  U_R^-\setminus \p U^-_{N})}
       (z, \xi),\]
 to see that
 \[    H_{  U_R^{- }}(w,z)  \asymp H_{U_N^-}(w,
 \p U_N ) \, H_{ \p(U_R^- \setminus \p U^-_{N})}
       (z, \p U_N).\]
This can be estimated using the fact that $H_{U_N^-}(w,
 \p U_N ) \asymp H_{U_R^-}(w,
 \p U_N )$ and by comparing the other term with the Poisson kernel in a
 rectangle which can be given
 explicitly in terms of a finite Fourier series (see \cite[Section 8.1]{LL}).
  
  \end{proof}
\begin{lemma}\label{origintointeriorrw}Let $0<\alpha<1$. There exists $\beta>0$ such that for $R\in\mathbb{N}, a \in U_{\lfloor R^{\alpha}\rfloor}^-$ and $b \in \p U_R$,
\[\frac{H_{\partial \slit_R}(0,b)}{H_{\partial \slit_R}(0,\p U_R)} = \frac{H_{\slit_R}(a ,b)}{H_{\slit_R}(a ,\p U_R)}(1+O(R^{-\beta})).\]
\end{lemma}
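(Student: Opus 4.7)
The plan is to show that, up to a factor $1 + O(R^{-\beta})$, the conditional distribution on $\p U_R$ of the exit point of a random walk in $U_R^-$ is independent of the starting point when that starting point lies in $U_{\lfloor R^\alpha\rfloor}^-$. Both sides of the claimed identity are precisely such conditional distributions. The left-hand side, seeded by the boundary Poisson kernel at $0$, unpacks via $H_{\p U_R^-}(0,\cdot) = \frac14[H_{U_R^-}(-1,\cdot) + H_{U_R^-}(i,\cdot) + H_{U_R^-}(-i,\cdot)]$ into an average of interior Poisson kernels from the three neighbors of $0$ lying in $U_R^-$; the right-hand side is the same object seeded at $a$. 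Thus it suffices to prove that
\[
\frac{H_{U_R^-}(a,b)/H_{U_R^-}(a, \p U_R)}{H_{U_R^-}(a',b)/H_{U_R^-}(a', \p U_R)} = 1 + O(R^{-\beta})
\]
uniformly in $a, a' \in U_{\lfloor R^\alpha\rfloor}^-$ and $b \in \p U_R$.

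I would attack this via the strong Markov property at an intermediate scale. Fix $m = \lfloor R^\gamma \rfloor$ for some $\gamma \in (\alpha, 1)$ (to be optimized). For any $a \in U_{\lfloor R^\alpha\rfloor}^-$ the random walk started at $a$ in $U_R^-$ must hit $\p U_m$ (or the slit inside $U_m$) before leaving $U_R^-$, so
\[
H_{U_R^-}(a,b) = \sum_{w \in \p U_m \setminus [0,m]} H_{U_m^-}(a,w)\, H_{U_R^-}(w, b),
\]
and the same decomposition applies to $H_{U_R^-}(a, \p U_R)$. Consequently the ratio $H_{U_R^-}(a, b)/H_{U_R^-}(a, \p U_R)$ is a convex combination, with some weights $\pi_a$ on $\p U_m \setminus [0,m]$, of the ratios $r(w, b) := H_{U_R^-}(w, b)/H_{U_R^-}(w, \p U_R)$. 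If $r(w, b)$ is essentially independent of $w \in \p U_m$, up to a small multiplicative error, then the same is true of the convex combination, irrespective of the weights $\pi_a$ and $\pi_{a'}$.

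The key step is therefore to upgrade Lemma~\ref{lem:lawler-limic}, which gives $r(w, b) \asymp d_R(b)/R^2$, to a statement of the form $r(w, b) = r(w', b)\,(1 + O((m/R)^\delta))$ for $w, w' \in \p U_m \setminus [0, m]$ and some $\delta > 0$. To extract such sharpness, I would iterate the Markov decomposition at a larger intermediate scale $m'$ with $m \ll m' \ll R$, writing $H_{U_R^-}(w, b) = \sum_{w' \in \p U_{m'}\setminus [0,m']} H_{U_{m'}^-}(w, w') H_{U_R^-}(w', b)$, and then invoke the explicit finite-Fourier formula for the Poisson kernel in a rectangle (as sketched in the proof of Lemma~\ref{lem:lawler-limic} and in \cite[Section 8.1]{LL}) to obtain an asymptotic factorization $H_{U_R^-}(w', b) = f_R(w') g_R(b)(1 + O((m'/R)^\delta))$ on the relevant scales. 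Combined with the harmonic measure estimates of Lemma~\ref{lem:lawler-limic} and a Harnack comparison at scale $m$, this yields the desired sharp dependence of $r(w, b)$ on $w$.

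The main obstacle I foresee is this last quantitative factorization: translating the continuous picture (in which the slit square is a conformal image of a half-disk under $z \mapsto \sqrt{z}$, and the sharp factorization of the Poisson kernel from mesoscopic starting points is classical) into the discrete setting with an explicit polynomial error rate. A workable alternative to the rectangle-Fourier approach is to couple the discrete walk to Brownian motion via the KMT estimate of Lemma~\ref{kmt}, transport the continuous factorization onto the lattice, and absorb the $\log R$ coupling error into the polynomial rate by a judicious choice of scales. Either way, $\beta$ emerges as a positive power of $1 - \gamma$ and $\gamma - \alpha$, optimized at the end.
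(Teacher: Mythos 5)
Your setup (strong Markov decomposition at an intermediate scale, reduction to showing that $r(w,b)=H_{U_R^-}(w,b)/H_{U_R^-}(w,\p U_R)$ is nearly $w$-independent on $\p U_m$) is sound, and you correctly single out Lemma~\ref{lem:lawler-limic} as the key input. But you then aim for a sharp multiplicative factorization $H_{U_R^-}(w',b)=f_R(w')g_R(b)(1+o(1))$ at a single mesoscopic scale, via explicit rectangle Fourier series or KMT, and this is where the gap lies: that factorization is essentially the statement to be proved, pushed one scale outward, and neither of your proposed tools delivers it directly. The rectangle comparison sketched in the proof of Lemma~\ref{lem:lawler-limic} only yields $\asymp$, not a $1+o(1)$; and a KMT coupling produces \emph{additive} errors of size $\log R$ which are hard to convert into a uniform multiplicative error, particularly for $w'$ near the tip or boundary of the slit where escape probabilities are small.

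The paper avoids the factorization issue entirely by iterating the coarse $\asymp$-estimate over dyadic scales. Let $\mu_m,\nu_m$ be the hitting distributions on $\p U_m$ of the walk started at $0$, resp.\ $a$, in $U_R^-$, conditioned to exit at $\p U_R$. These arise from the same conditioned (h-transformed) Markov chain sampled at successive stopping times, so $\|\mu_m-\nu_m\|$ is nonincreasing in $m$. Lemma~\ref{lem:lawler-limic} gives a product-form comparability up to constants for the Poisson kernel, which is exactly a Doeblin minorization for the conditional transition from $\p U_m$ to $\p U_{2m}$; this yields $\|\mu_{2m}-\nu_{2m}\|\le (1-\delta)\|\mu_m-\nu_m\|$. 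Iterating over the $\sim(1-\alpha)\log_2 R$ dyadic scales between $R^\alpha$ and $R/4$ gives $\|\mu_m-\nu_m\|\le c\,R^{-\beta}$, and one final application of the comparability in Lemma~\ref{lem:lawler-limic} upgrades this TV bound to the pointwise ratio bound on $\p U_R$. No sharp single-scale asymptotics are needed: the polynomial error rate comes from the number of dyadic steps, not from precision at any one scale. This soft iterative mechanism is the idea your proposal is missing.
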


\def \F {{\mathcal F}}

\begin{proof} If $R^\alpha < m <R$, let $\mu_m,
\nu_m$ be the distribution of the first visit (after time
zero) to $\partial
U_m$ starting at $0,a$, respectively, conditioned
that the walk leaves $U_R^-$ at $\partial U_R$. Set 
\[   \|\mu_m - \nu_m\| =
 \frac 12 \sum_{w \in \p U_m} | \mu_m(w) - \nu_m(w)|.\]
Then a simple coupling argument implies that $\|\mu_m - \nu_m\|$ is decreasing in $m$
 and the last lemma shows that there exists $\delta >0$
 such that if $R^\alpha < m <R/4$, then
 \[    \|\mu_{2m} - \nu_{2m}\|
         \leq (1-\delta) \,  \|\mu_{ m} - \nu_{ m}\|.\]
  In particular,  if $R/4 \leq m < R/2$,
  \[    \|\mu_m - \nu_m\| \leq c \, R^{-\beta}, \]
  and
  \[  |\mu_R(b) -\nu_R(b)| 
    \leq   2 \, \|\mu_m - \nu_m\| \, \max_{w \in \p
    U_m} H_{U_R^-}(w,b)
      \leq c \, R^{-\beta} \,\mu_R(b).\]

\end{proof}
\begin{lemma}\label{conditionalboundary}
For every $0<\alpha < 1$, there exists $\beta>0$ such that for any $z,z'\in\Z^2$ with $|z|,|z'|\leq R^{\alpha}$, any $w\in C_R$,
$$\Prob^z\{S(\sigma_R)=w|\sigma_R < \tau_+\}=\Prob^{z'}\{S(\sigma_R)=w|\sigma_R < \tau_+\}(1+O(R^{-\beta})).$$
\end{lemma}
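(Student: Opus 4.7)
The plan is to reduce Lemma~\ref{conditionalboundary} to Lemma~\ref{origintointeriorrw} by applying the strong Markov property at the first exit from an intermediate slit-square. Fix $\alpha' \in (\alpha, 1)$ and set $R' = \lfloor R/2 \rfloor$, so that for $R$ large enough one has $R^{\alpha} \le \lfloor (R')^{\alpha'}\rfloor$, and moreover every $\zeta \in \p U_{R'}$ satisfies $|\zeta| \le \sqrt{2}\,R' < R$, so $\sigma_R$ strictly exceeds the exit time from $U_{R'}^-$ when the walk is started inside $U_{R^{\alpha}}$. Note that on $\{\sigma_R < \tau_+\}$ one automatically has $z \notin \Z_+$, so $z \in U_{\lfloor (R')^{\alpha'}\rfloor}^-$, placing us in the regime where Lemma~\ref{origintointeriorrw} applies.

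Let $T = \min\{j \ge 0 : S_j \notin U_{R'}^-\}$. On $\{T < \tau_+\}$ the walk exits through $\p U_{R'}$, so strong Markov at $T$ yields
\[
\Prob^z\{\sigma_R < \tau_+,\, S(\sigma_R) = w\} = \sum_{\zeta \in \p U_{R'}} H_{U_{R'}^-}(z, \zeta)\, p_\zeta(w),
\]
\[
\Prob^z\{\sigma_R < \tau_+\} = \sum_{\zeta \in \p U_{R'}} H_{U_{R'}^-}(z, \zeta)\, q_\zeta,
\]
where $p_\zeta(w) = \Prob^\zeta\{\sigma_R < \tau_+,\, S(\sigma_R) = w\}$ and $q_\zeta = \Prob^\zeta\{\sigma_R < \tau_+\}$ depend only on $\zeta$ and $w$, and in particular not on the starting point $z$. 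Identical formulas hold with $z$ replaced by $z'$.

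Next, applying Lemma~\ref{origintointeriorrw} at scale $R'$ with exponent $\alpha'$, once for $z$ and once for $z'$, and taking the ratio of the two resulting identities yields a constant $\lambda(z, z') > 0$ such that
\[
H_{U_{R'}^-}(z, \zeta) = \lambda(z, z')\, H_{U_{R'}^-}(z', \zeta)\, (1 + O(R^{-\beta}))
\]
uniformly over $\zeta \in \p U_{R'}$, for some $\beta > 0$. Substituting into the two strong-Markov identities above, the factor $\lambda(z, z')$ cancels upon forming the conditional ratio, and the uniform multiplicative error factors out of each (nonnegative) sum, producing
\[
\Prob^z\{S(\sigma_R) = w \mid \sigma_R < \tau_+\} = \Prob^{z'}\{S(\sigma_R) = w \mid \sigma_R < \tau_+\}\,(1 + O(R^{-\beta})),
\]
as desired.

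The bulk of the work is really already contained in Lemma~\ref{origintointeriorrw}, which supplies the crucial uniform-in-$\zeta$ comparison; given that, the present lemma is a routine strong Markov argument. The only genuine technical choices are the intermediate scale $R' = \lfloor R/2 \rfloor$ (which cleanly decouples the $U_{R'}^-$-harmonic measure from the subsequent hit on $C_R$) and the exponent $\alpha' \in (\alpha, 1)$ (which ensures that $z, z'$ lie in the regime covered by Lemma~\ref{origintointeriorrw} at scale $R'$).
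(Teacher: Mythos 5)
Your proposal is correct and follows essentially the same route as the paper: apply the strong Markov property at the first exit from $U_{\lfloor R/2\rfloor}^-$, invoke Lemma~\ref{origintointeriorrw} to obtain a uniform-in-boundary-point comparison of the two harmonic measures, and take ratios. The only cosmetic difference is that you work with unconditional probabilities and cancel the common factor $\lambda(z,z')$ at the end, whereas the paper conditions on $\{S_T\in\p U\}$ throughout; your explicit handling of the scale mismatch via $\alpha'\in(\alpha,1)$ is a small detail the paper passes over silently.
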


\begin{proof}Write $U, U^{-}$ for $U_{\lfloor R/2\rfloor}, U_{\lfloor R/2\rfloor}^-$ and
let $T$ be the first time that a simple random walk
leaves $U^-$.  We can write
\[  \Prob^z\{S(\sigma_R \wedge \tau_+) = w \mid S_T \in \p U\}
   = \sum_{b \in \p U} \frac{H_{U-}(z,b)}{H_{U^-} (z, \p U)}\, \Prob^b\{S(\sigma_R \wedge \tau_+) = w\},\]
and similarly for $z'$.  Using 
 Lemma \ref{origintointeriorrw} we see that for $|z|,|z'|\leq R^{\alpha}, b \in \p U$,
\[ \frac{H_{U^-}(z,b)}{H_{U^-} (z, \p U)}
 = \frac{H_{U^-}(z',b)}{H_{U^-} (z', \p U)}
  \,
(1+O(R^{-\beta})), \]
which  implies
\[  \Prob^z\{S(\sigma_R \wedge \tau_+) = w \mid S_T \in \p U\}
 =  \Prob^{z'}\{S(\sigma_R \wedge \tau_+) = w \mid S_T \in \p U\}\,
(1+O(R^{-\beta})), \]
\[   \Prob^z\{\sigma_R < \tau_+ \mid S_T
 \in \p U\} =  \Prob^{z'}\{\sigma_R < \tau_+\mid S_T
 \in \p U\}\,
(1+O(R^{-\beta})). \]
Since,
\[ \Prob^z\{S(\sigma_R)=w|\sigma_R < \tau_+\}
 =  \frac{\Prob^z\{S(\sigma_R) = w \mid S_T \in \p U\}}
   {\Prob^z\{\sigma_R  < \tau_+ \mid S_T \in \p U\}},\]
 and similarly for $z'$, we get the lemma.
\end{proof}
 
For $z \in \mathbb{C}$ we will write $\theta_z = \arg z \in [0, 2\pi)$.
 
\begin{lemma}\label{piover4} There exist $c < \infty, \beta > 0$
such that  
if $|z| \leq R^{3/4}$,  then
\[  
 \left|\E^z\left[\sin(\theta_{S(\sigma_R)}/2) \mid
   \sigma_R < \tau_+ \right] - \frac{\pi}{4}
    \right| \leq c \, R^{-\beta}. \]
\end{lemma}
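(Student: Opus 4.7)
The plan is to reduce Lemma~\ref{piover4} to a calculation for planar Brownian motion, in three stages. First, Lemma~\ref{conditionalboundary} applied with $\alpha = 3/4$ shows that for any $z$ with $|z| \leq R^{3/4}$ and any convenient fixed $z_0$ (say $z_0 = i$),
\[\Prob^z\{S(\sigma_R) = w \mid \sigma_R < \tau_+\} = \Prob^{z_0}\{S(\sigma_R) = w \mid \sigma_R < \tau_+\}(1 + O(R^{-\beta})),\]
so integrating the bounded function $w \mapsto \sin(\theta_w/2)$ yields
\[\E^z[\sin(\theta_{S(\sigma_R)}/2) \mid \sigma_R < \tau_+] = \E^{z_0}[\sin(\theta_{S(\sigma_R)}/2) \mid \sigma_R < \tau_+] + O(R^{-\beta}).\]
It therefore suffices to prove the statement for the single starting point $z_0 = i$.

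Next, I would couple $S$ with a planar Brownian motion $B$ using Lemma~\ref{kmt}, so that $|B_t - S_{2t}| \leq c \log R$ for $t \leq T_R$ outside an event of probability $O(R^{-3})$. Writing $\tau_+^B$ for the hitting time of $\R_+$ by $B$, one compares
\[\E^{z_0}[\sin(\theta_{S(\sigma_R)}/2) \mathbf{1}\{\sigma_R < \tau_+\}] \text{ with } \E^{z_0}[\sin(\theta_{B(T_R)}/2) \mathbf{1}\{T_R < \tau_+^B\}].\]
On the KMT-good event, the exit positions differ by $O(\log R)$, and $\sin(\theta/2)$ is Lipschitz in the argument, so the difference in integrands is $O(\log R / R)$. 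The events $\{\sigma_R < \tau_+\}$ and $\{T_R < \tau_+^B\}$ can disagree only when one of the processes is within $O(\log R)$ of $\R_+$ while the other crosses it, and a Beurling-type bound controls such near-misses by $O(R^{-1/2 - \beta'})$ for some $\beta' > 0$. Since $\Prob^{z_0}\{\sigma_R < \tau_+\} \asymp R^{-1/2}$ by Proposition~\ref{limitexists}, dividing through gives
\[\E^{z_0}[\sin(\theta_{S(\sigma_R)}/2) \mid \sigma_R < \tau_+] = \E^{z_0}[\sin(\theta_{B(T_R)}/2) \mid T_R < \tau_+^B] + O(R^{-\beta'}).\]

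Finally, conformal invariance evaluates the Brownian expectation. The branch of $\sqrt{\cdot}$ taking $\C \setminus \R_+$ onto the upper half-plane $\mathbb{H}$ transports BM in the slit disk of radius $R$ to a time-changed BM in the upper half-disk of radius $\sqrt{R}$ started at $\sqrt{z_0} = e^{i\pi/4}$, and halves the exit angle: $\theta = 2\phi$ with $\phi \in (0,\pi)$ and $\sin(\theta/2) = \sin\phi$. Composing with the conformal map $f(w) = \frac{1}{2}(w + w^{-1})$ from the upper half-disk of radius $1$ to the lower half-plane (with $|f'(e^{i\phi})| = \sin\phi$), together with the explicit Cauchy Poisson kernel in the half-plane, shows that the conditional density of $\phi$ given exit through the upper semicircle converges to $\tfrac{1}{2} \sin\phi$ on $(0,\pi)$, with error $O(R^{-1/2})$ for the specific starting point $e^{i\pi/4}$. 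Thus
\[\E^{z_0}[\sin(\theta_{B(T_R)}/2) \mid T_R < \tau_+^B] = \int_0^\pi \sin\phi \cdot \tfrac{1}{2} \sin\phi \, d\phi + O(R^{-1/2}) = \frac{\pi}{4} + O(R^{-1/2}).\]

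The hard part will be the error analysis in the coupling step: since we condition on an event of probability only $\Theta(R^{-1/2})$, the discrepancy event in whether the walk or the BM hits $\R_+$ first must be shown to have probability $o(R^{-1/2})$, not merely $o(1)$. This requires combining the $O(\log R)$ accuracy of KMT with a Beurling projection estimate showing that a process avoiding $\R_+$ up to the exit circle is unlikely to spend significant time within a logarithmic distance of $\R_+$.
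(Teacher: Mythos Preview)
Your three-stage strategy---reduce to a single starting point via Lemma~\ref{conditionalboundary}, couple with Brownian motion via KMT, then evaluate the Brownian expectation by conformal invariance---is exactly the paper's. The crucial difference is the choice of that single starting point: you reduce to the fixed lattice point $z_0=i$, whereas the paper reduces to the mesoscopic point $z=-\lfloor R^{3/4}\rfloor$. This is not cosmetic, and it is where your argument breaks.

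From $z_0=i$ the conditioning event $\{\sigma_R<\tau_+\}$ has probability only $\asymp R^{-1/2}$, so passing to the conditional expectation requires $\Prob\bigl(\{\sigma_R<\tau_+\}\,\triangle\,\{T_R<\tau_+^B\}\bigr)=o(R^{-1/2})$. Your sketched Beurling near-miss bound cannot deliver this: the point $i$ already sits at distance $1$ from $\R_+$, well inside the $O(\log R)$ uncertainty of the KMT coupling, so near the tip of the slit there is no mechanism forcing the random-walk and Brownian events to agree to this accuracy. The paper's choice $-\lfloor R^{3/4}\rfloor$ is precisely engineered to sidestep this: from there $\Prob\{\sigma_R<\tau_+\}\asymp R^{-1/8}$, which comfortably absorbs the $O(R^{-1/4})$ bound on the symmetric difference that the paper obtains (via \eqref{9.19} and a harmonic-measure estimate near the corner point $R$). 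You correctly flag the conditioning step as ``the hard part,'' but the resolution is to pick a better starting point, not to seek a sharper near-miss estimate from $i$.

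Two smaller points. First, the claim that on the KMT-good event the \emph{exit positions} differ by $O(\log R)$ is not right: KMT controls $|B_t-S_{2t}|$ at matched times, but the stopping times $s_R\wedge T_+$ and $(\sigma_R\wedge\tau_+)/2$ differ, and the paper only gets $|B_{s_R\wedge T_+}-S_{\sigma_R\wedge\tau_+}|\le R^{1/2}\log R$ with high probability. Second, invoking Proposition~\ref{limitexists} for $\Prob^{i}\{\sigma_R<\tau_+\}\asymp R^{-1/2}$ is circular here, since that proposition is proved in the appendix \emph{using} Lemma~\ref{piover4}; the needed asymptotic should come directly from a Beurling estimate.
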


    \begin{proof} 
Lemma~\ref{conditionalboundary} implies
    that there exists $\beta >0$ such that for $|z|,|w| \leq R^{3/4}$, 
 \[   \left|\E^z\left[\sin(\theta_{S(\sigma_R)}/2) \mid
   \sigma_R < \tau_+ \right]
   - \E^w\left[\sin(\theta_{S(\sigma_R)}/2) \mid
   \sigma_R < \tau_+ \right]\right| \leq c \, R^{-\beta} . \]
Hence it suffices to show the result for $z 
= -\lfloor R^{3/4} \rfloor.$  
For the remainder of the proof, we write
$\Prob, \E$ for $\Prob^z,\E^z$ and
let $(B_t,S_t)$ be a Brownian motion and a simple random walk coupled as in Lemma \ref{kmt}, so that
\[   \Prob\left\{\sup_{0 \leq t \leq s_{2R}}
    |B_t - S_{2t}| \geq c_0 \, \log R\right\}
       \leq c_0 \, R^{-3}. \]
 Here  $s_{R}$ is the hitting time by  the
 Brownian motion of
  $\{|z| = R\}$. We let $T_{+}$ be the hitting time of $[0, \infty)$ by $B$.       
 
Using the Beurling estimate (see \cite{LLpaper} for the discrete version, \cite{graybook} for a discussion of the continuous case)
in an argument very similar to that in Proposition 3.1 in \cite{BJK}, one can show that
\begin{equation}\label{9.19}   \Prob\left\{\left|B_{s_R \wedge T_+} 
 - S_{\sigma_R \wedge \tau_+ }\right|
   \geq R^{1/2} \, \log R\right\}
       \leq c \, R^{-1/4}.
       \end{equation}
  Let $E_R, \tilde E_R$ be the events
    \[   E_R =  \{\sigma_R < \tau_+\}
    , \;\;\;\; \tilde E_R = \{ s_{R} < T_{+}\}. \]
Proposition 2.4.5 in \cite{greenbook} for the former and a direct calculation for the latter give
    $\Prob (E_R) \asymp \Prob (\tilde E_R) 
  \asymp R^{-1/8}$. 
 However,   
note that
\begin{eqnarray*}
\Prob (E_R \triangle \tilde E_R) &  \leq & \Prob\{d(B(s_R\wedge T_+),R)\leq R^{1/2}\log R\}\\
&&\hspace{5pc} +\Prob\left\{\left|B_{s_R \wedge T_+} 
 - S_{\sigma_R \wedge \tau_+ }\right|
   \geq R^{1/2} \, \log R\right\} \\
   & \leq & c\, R^{-1/2}\log R + c \, R^{-1/4} \leq c \, R^{-1/4},
\end{eqnarray*}
where the first term on the right of the first inequality can be estimated by a direct calculation using conformal invariance and the second is estimated in \eqref{9.19}. Here $\triangle$ denotes
    symmetric difference.  Hence
  we have produced a coupling of a Brownian
  motion conditioned on $\tilde E_R$ with
  a random walk conditioned on $E_R$ such that,
  except for an event of probability $O(R^{-1/8})$,
 $ |B_{s_R \wedge T_+} 
 - S_{\sigma_R \wedge \tau_+ } |
   \leq R^{1/2} \, \log R$. 
In particular,
\[ \left| \E  \left[\sin(\theta_{S(\sigma_R)}/2) \mid
  E_R \right]
    - 
  \E  \left[\sin(\theta_{B(s_R)}/2) \mid
 \tilde E_R \right] \right| \leq c \, R^{-1/8}. \]
We are left with estimating the quantity for Brownian
motion which can be done using conformal invariance. 
    The Poisson kernel in the slit disk is (see for instance the proof of Lemma 5.1 in \cite{BLV})
    \begin{equation}\label{poissondisk}
    h_{\Disk\setminus [0,1)}(\ee e^{i\mu}, e^{i\theta})=\frac{1}{2\pi}\ee^{1/2}\sin(\mu/2)\sin(\theta/2) +O(\ee).
    \end{equation}
    This implies that, with $\xi = s_1\wedge T_+$, 
    $$\Prob^{\ee e^{i\mu}}\{|B_\xi|=1\}=\frac{2}{\pi}\ee^{1/2}\sin(\mu/2) +O(\ee),$$
    so the Poisson kernel in the slit disk conditional on leaving at the circle before hitting the positive real line is 
    \begin{equation}\label{Poisslit}
    \bar{h}_{\Disk\setminus [0,1)}(\ee e^{i\mu}, e^{i\theta})=\frac{1}{4}\sin(\theta/2) +O(\ee^{1/2}).
    \end{equation}
   Therefore, with $\xi$ the first exit time of $\Disk\setminus [0,1)$,
    \begin{equation}\label{estimateforB}
  \E^{\ee e^{i\mu}}\left[\sin\left(\frac{\theta_{B_{\xi}}}{2}\right)\bigg||B_{\xi}|=1\right]=\frac{1}{4}\int_0^{2\pi}(\sin^2(\theta/2)+O(\ee^{1/2})\sin(\theta/2))\,d\theta,
  \end{equation}
  and the right-hand side equals $\frac{\pi}{4}+O(\ee^{1/2})$,
from which our estimate follows by scaling.

    \end{proof}
    
We postpone the proof of the following proposition to the Appendix.

\begin{proposition}\label{limitexists}
    For each $z$, the limit
\[   v(z) = \lim_{R \rightarrow \infty} R^{1/2}
  \, \Prob^z\{\sigma_R < \tau_+\} \]
  exists.  Moreover, there exists $c < \infty$
  such that for all $z \in \Z^{2} \setminus \{0\}$
  \[       \left| v(z) - \frac 4 \pi \, \Im \, \sqrt z  \right|
     \leq   c\, \frac{\sin(\theta_{z}/2)}{|z|^{1/2}}.\]
 \end{proposition}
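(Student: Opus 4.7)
The plan is to first show that $R^{1/2}\Prob^z\{\sigma_R < \tau_+\}$ is Cauchy in $R$ by a scale-matching argument based on the strong Markov property at $\sigma_R$ together with Lemma~\ref{piover4}, producing the limit $v(z)$. I will then identify $v(z)$ with $\frac{4}{\pi}\Im\sqrt{z}$ by comparison with the corresponding Brownian motion quantity via the KMT coupling (Lemma~\ref{kmt}), and finally refine the error to the sharp bound by working at an optimally balanced scale. The two key ingredients already in hand — the conditional expectation $\E^z[\sin(\theta_{S(\sigma_R)}/2)\mid\sigma_R<\tau_+] = \pi/4 + O(R^{-\beta})$ and the strong KMT coupling — are precisely what this program requires.

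\textbf{Cauchy property and existence of $v(z)$.} Fix $R < R' \le R^{2}$. The strong Markov property at $\sigma_R$ gives
\[ \Prob^z\{\sigma_{R'} < \tau_+\} = \E^z\!\left[\mathbf{1}_{\sigma_R < \tau_+}\, \Prob^{S(\sigma_R)}\{\sigma_{R'} < \tau_+\}\right]. \]
First I would establish a one-scale asymptotic: for $|w| = R$,
\[ \Prob^w\{\sigma_{R'} < \tau_+\} = \frac{4}{\pi}\sin(\theta_w/2)\sqrt{R/R'}\bigl(1 + O(R^{-\beta})\bigr). \]
The Brownian analog is obtained via the conformal map $\zeta \mapsto \sqrt{\zeta}/\sqrt{R'}$ sending the slit disk $\{|\zeta|<R'\}\setminus[0,R')$ to the unit upper half-disk; the harmonic measure of the top semicircle from a point $u$ near $0$ is $\frac{4}{\pi}\Im u\bigl(1 + O(|u|)\bigr)$, a half-plane computation in the spirit of \eqref{Poisslit}. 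Transfer to the random walk uses Lemma~\ref{kmt} exactly as in the derivation of \eqref{9.19}. Inserting this into the strong Markov identity and applying Lemma~\ref{piover4} yields
\[ \sqrt{R'}\,\Prob^z\{\sigma_{R'} < \tau_+\} = \sqrt{R}\,\Prob^z\{\sigma_R < \tau_+\}\bigl(1 + O(R^{-\beta})\bigr). \]
Iterating along the dyadic scale $R_k = 2^{k} R_0$, the sequence $\sqrt{R_k}\Prob^z\{\sigma_{R_k}<\tau_+\}$ is Cauchy, producing $v(z)$ together with an $O(R^{-\beta})$ convergence rate.

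\textbf{Identification and sharp error.} For $|z|$ large and $R\gg |z|$, the same Brownian computation applied directly at the starting point $z$ gives
\[ \Prob^z\{s_R < T_+\} = \frac{4}{\pi}\,\frac{\Im\sqrt{z}}{\sqrt{R}}\bigl(1 + O(|z|/R)\bigr), \]
and Lemma~\ref{kmt} transfers this to the random walk; letting $R\to\infty$ after multiplying by $\sqrt{R}$ identifies $v(z)$ with $\frac{4}{\pi}\Im\sqrt{z}$ to leading order. The hardest part, and the main obstacle, is the sharp remainder $O(\sin(\theta_z/2)/|z|^{1/2})$ — a relative correction of order $1/|z|$. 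KMT alone is not sharp enough for this: one must balance KMT and Brownian boundary errors by choosing an optimal intermediate scale $R \asymp |z|^{1+\delta}$, and exploit the reflection symmetry across the imaginary axis (already used in Lemma~\ref{prop.kias}) to cancel the leading odd-order lattice discrepancy and isolate the $O(1/|z|)$ second-order term. This refinement is what I expect to require the most care in the Appendix.
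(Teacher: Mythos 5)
Your approach is genuinely different from the paper's, and while the existence part is plausible in outline, the sharp error bound has a real gap that your proposal does not close.

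The paper does not use the KMT coupling for this proposition at all. Instead it sets $f(z) = \Im\sqrt{z}$, which is continuous-harmonic on $\C \setminus [0,\infty)$, and works entirely on the lattice side. The key observation is that the \emph{discrete} Laplacian of $f$ is very small: since $L$ is a fourth-order-accurate approximation to $\Delta$ and the fourth derivatives of $f$ decay like $|z|^{-7/2}$, one gets $|Lf(z)| \le c|z|^{-7/2}$ on $A = \Z^2 \setminus \Z_+$. A martingale argument then gives, for $f_R(z) := \E^z[f(S(\sigma_R \wedge \tau_+))]$, the bound $|f(z) - f_R(z)| \le \sum_w G_A(z,w)\,|Lf(w)|$, and a dyadic summation using a sharpened Beurling estimate (the $\sin(\theta_z/2)$-weighted version, \eqref{BeurlingSharper}) shows this sum is $\le c\, f(z)\,|z|^{-1} = c\,\sin(\theta_z/2)\,|z|^{-1/2}$. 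Both the existence of the limit (via the Cauchy property in $R$, comparing $f_R$ and $f_{R'}$ through their boundary values) and the sharp error bound fall out of this single estimate, with Lemma~\ref{piover4} only entering at the end to translate $f_R(z)$ into $\frac{\pi}{4} R^{1/2}\Prob^z\{\sigma_R<\tau_+\}$. This is a cleaner and more precise route: the error is controlled at every site by an honest inequality rather than a coupling.

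The genuine gap in your plan is exactly where you flag it. Your one-scale asymptotic $\Prob^w\{\sigma_{R'}<\tau_+\} = \frac{4}{\pi}\sin(\theta_w/2)\sqrt{R/R'}(1+O(R^{-\beta}))$ cannot hold with a uniform \emph{relative} error over $|w|=R$: when $w$ is within $O(\log R)$ of the slit, the probability itself is as small as $O(R^{-1}\sqrt{R/R'})$, while the KMT-plus-Beurling comparison yields an \emph{absolute} error of order $R^{-1/4}\sqrt{R/R'}$ (as in \eqref{9.19}), which dominates. The Cauchy argument can likely be patched by keeping the error absolute and showing it is small after averaging against the exit distribution, but the resulting rate is only some $O(R^{-\beta})$. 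For the sharp bound $O(\sin(\theta_z/2)/|z|^{1/2})$ — a \emph{relative} correction of order $1/|z|$, vanishing as $z$ approaches the slit — the proposal offers only a heuristic. Optimizing an intermediate scale $R \asymp |z|^{1+\delta}$ does not help because the target error is a function of $z$ alone (it must survive $R \to \infty$), and the reflection symmetry across the imaginary axis does not apply here: $f(-\bar z) = |z|^{1/2}\cos(\theta_z/2) \ne f(z)$, so there is no odd-order cancellation of that kind to exploit. The discrete-Laplacian-plus-Green's-function mechanism in the paper's Appendix is the ingredient your proposal is missing; without it, you would be left with a rate far weaker than the stated $\sin(\theta_z/2)/|z|^{1/2}$.
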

  Note that 
  \[  v(z) = 0 , \;\;\;\; z \in  \Z_+, \]
  \[  \dlaplace  v(z) = 0 , \;\;\;
   z \in \Z^2 \setminus \Z_+, \]
 \[\dlaplace^q v(z) = 0, \;\;\;\; z \in  \Z^2\setminus\{0\}.\]
 Defining
  \[       f(z) = \Im \, \sqrt z =   |z|^{1/2} \, \sin(\theta_z/2),
   \]
 it follows from \eqref{fR} below 
 that
 \begin{eqnarray*}
 v(z) & = & \lim_{R \rightarrow \infty} \frac{4}\pi
 \,\E^z[f(S_{\sigma_R \wedge \tau_+})]\\
 & = &  \lim_{R \rightarrow \infty}
    \frac{4}\pi \sum_{w \in   C_R}
         H_{D_R}^q(z,w) \, f(w) .
  \end{eqnarray*}

  The second equality uses the fact that
  $f(w) = f(\overline{w})$ and hence  if $z
   \in D_R \cap \Z_+, $  we can reflect paths in $\mathbb{R}$ to see that
   \[        \sum_{w \in   C_R}
         H_{D_R}^q(z,w) \, f(w)  = 0.\]

   We now consider a discrete harmonic conjugate of $v$ (though note that $u$ lives on the same lattice as $v$), 
   \begin{equation}\label{v}    
   u(x+iy) =  \left\{ 
       \begin{array} {ll}  v(-x + iy),  &  y \geq   0\\
           - v(-x + iy) , & y < 0 \end{array} \right.
   \end{equation}
           We can think of $u$ as (a constant times) a discrete version of $\Re \, \sqrt{z}$.
   Then,
   \[ u(z) = 0 , \;\;\;\; z \in  \Z_-, \]
  \[  \dlaplace u(z) = 0 , \;\;\;
   z \in \Z^2 \setminus \{(x,y)\in\Z^2:x\geq 0, y\in \{-1,0\}\}, \]
 \[    \dlaplace ^qu(z) = 0, \;\;\;\; z \in  \Z^2\setminus\{0\}.\]

  If $\eta \in \saws$, we define
   \begin{equation}\label{ueta}   v_\eta(z) = v(z) - \sum_{y \in \eta} H^q_{\Z^2\setminus\eta}\
   (z,y) \, v(y) 
     \end{equation}
   \begin{equation}\label{veta}   
   u_\eta(z) = u(z) -\sum_{y \in \eta} H^q_{\Z^2\setminus\eta}
   (z,y)\, u(y)  . 
   \end{equation}
    Then we have
    \[    u_\eta (z) = v_\eta(z) = 0 , \;\;\;\ z \in \eta,\]
    \[  \dlaplace ^qu_\eta(z) = \dlaplace ^q v_\eta(z) = 0 , \;\;\;
        z \in \Z^2\setminus\eta.\]
        By Lemma~\ref{prop.kias}, if $|z| > n$, 
        \begin{eqnarray*}
        |\sum_{y \in \eta} H^q_{\Z^2\setminus\eta}(z,y) \, v(y) | & = &|\sum_{w \in D_n} \sum_{y \in \eta} H^q_{
        \Z^2 \setminus D_n}(z,w)H^q_{\Z^2\setminus\eta}(w,y) \, v(y) | \\
        &\le& \sum_{w \in  D_n} \sum_{y \in \eta} 
        |H^q_{ \Z^2 \setminus D_n}(z,w)||H^q_{\Z^2\setminus\eta}(w,y) \, v(y) |\\
        & \le & c \max_{y \in \eta} |v(y)| \sqrt{\frac{n}{|z|}},
        \end{eqnarray*}
        and similarly for $u$.
      Hence, if $|z|$ is large enough,
     \begin{eqnarray*}  
     |v_\eta(z) - v(z)| + |u_\eta(z) - u(z)| & = & |\sum_{y \in \eta} H^q_{\Z^2\setminus\eta}(z,y) \, v(y) |  + |\sum_{y \in \eta} H^q_{\Z^2\setminus\eta}(z,y) \, u(y) |\\
         & \le & c_\eta \, |z|^{-1/2},
        \end{eqnarray*}
        where $c_\eta$ depends only on $\eta$.
                
    Recall that $A_n$ is the infinite strip
    $\{x+iy: |x| < n\}$.
     We define, for $z\in A_n$,  
   \[      v_\eta^{(n)} (z) = H^q_{A_n \setminus\eta}(z,-n), \;\;\;\;
       u_\eta^{(n)}(z) = H^q_{A_n \setminus \eta}(z,n) , \]
       where $u_\eta^{(n)} \equiv v_\eta^{(n)} \equiv 0$
       for  $z\in \eta$. 
    Finally, let
    \[     D^q_n (\eta) = \det \left[\begin{array}{cc}
                 L^qv_\eta^{(n)}(\eta_-) & L^qv_\eta^{(n)}(\eta_+)\\
                    L^q u_\eta^{(n)}(\eta_-) & L^q u_\eta^{(n)}(\eta_+)
                      \end{array} \right].\]

Before proving our main result for this section, Proposition~\ref{main}, we still need a convergence result (Lemma \ref{lem:jul26}) which will rely on Lemma \ref{limitpartial} for which the following definitions are needed:

For $z \in D_m, w \in C_m$, we let
  \[      h^\eta_m(z,w)=   \frac12 \, [H^q_{D_m \setminus \eta}(z,w)
 + H^q_{D_m \setminus \eta}(z,\overline w)].\]
 Note that for $z\in\eta, h^\eta_m(z,w)=0$.   We can think of $\{0\}$ as the ``empty'' SAW and define
    \[h_m(z,w) = h_m^{\{0\}}(z,w)= \frac12 \, [H^q_{D_m \setminus \{0\}}(z,w)
 + H^q_{D_m \setminus \{0\} }(z,\overline w)].\]
 
Note that $h_m(z,w) = 0 $ if $z \in \Z_+$,
so 
for all $z\in D_m$, 
 \begin{equation}\label{071219}  
 h_m(z,w)=\frac 12 \,\left[
 \Prob^z\{S(\sigma_m\wedge\tau_+)
  = w\}+\Prob^z\{S(\sigma_m \wedge\tau_+)=\overline w\}\right].
  \end{equation}

Notice that for any function $\phi$ defined on $C_m$, $\sum_{w \in C_m}\phi(w)=\sum_{w \in  C_m}\phi(\bar{w})$, so
  if  $\phi(w) = \phi(\overline w)$, then
 \begin{equation}\label{07-26}
       \sum_{w \in  C_m}
      H^q_{D_m \setminus \eta}(z,w) \, \phi(w)
        = \sum_{w \in  C_m}   h_m^\eta(z,w) \, \phi(w).
 \end{equation}

\begin{lemma}\label{limitpartial}
There exist  $0 <u, c < \infty$, 
such that for every SAW $\eta$ through $0$ and every  $z \in \Z^2\setminus\eta$,  if  $r = \max\{|z|, \diam(\eta)\}  $,
 $m \geq 2r$, and $w\in C_m$,
 \[       |h_m^\eta(z,w) - v_\eta(z) \, m^{-1/2} \, \mu_m(w)|
      \leq c \, (m/r)^{-\frac {1+u}{2} } \, \mu_m(w),\]
      where  
   \[        \mu_m(w) =  \frac{h_m(-1,w)}{\Prob^{-1}
      \{\sigma_m < \tau_+\}}.\]
              \end{lemma}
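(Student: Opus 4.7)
The plan is to prove the lemma by combining a first-passage decomposition of $h_m^\eta(z,w)$ with the asymptotic $h_m(z',w) \approx m^{-1/2}v(z')\mu_m(w)$, and then recognizing the result as $m^{-1/2}\mu_m(w)v_\eta(z)$ plus controlled error. Decomposing walks in $D_m \setminus \{0\}$ from $z$ according to whether they avoid $\eta$ or first visit $\eta \setminus \{0\}$ at some $y$, and using that $h_m(0,w)=0$ lets us add a vacuous $y=0$ term, I obtain
\[ h_m^\eta(z, w) = h_m(z, w) - \sum_{y \in \eta} H^q_{D_m \setminus \eta}(z, y)\, h_m(y, w), \]
which is exactly parallel to the defining identity $v_\eta(z) = v(z) - \sum_{y \in \eta} H^q_{\Z^2 \setminus \eta}(z, y) v(y)$. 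It therefore suffices to approximate each factor in the first display by its counterpart in the second and control the accumulated error.

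Assume first that $|z|$ and all $|y|$ for $y \in \eta$ are bounded by $m^\alpha$, with some fixed $\alpha<1$. For any such $z'$, factor $h_m(z',w) = \Prob^{z'}\{\sigma_m<\tau_+\} \cdot h_m(z',w)/\Prob^{z'}\{\sigma_m<\tau_+\}$, use Lemma~\ref{conditionalboundary} to replace the conditional exit distribution by $\mu_m(w)$ up to a relative error $O(m^{-\beta})$, and use Proposition~\ref{limitexists} together with the quantitative convergence rate (established in the Appendix) to replace $\Prob^{z'}\{\sigma_m<\tau_+\}$ by $m^{-1/2}v(z')$. In parallel, Lemma~\ref{withoutAn} applied with $K=\eta$ and $A=D_m\setminus\eta$ gives $\sum_{y \in \eta}|H^q_{\Z^2 \setminus \eta}(z,y) - H^q_{D_m \setminus \eta}(z,y)| \leq c\sqrt{r/m}$. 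Using $|v(y)| \leq cr^{1/2}$ for $y \in \eta$ and collecting the three sources of error produces the claimed bound, with $u>0$ determined by $\alpha, \beta$ and the convergence rate.

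To handle larger $|z|$ or $|y|$, introduce an intermediate scale $\rho$ with $r \ll \rho \ll m$, chosen so that Lemma~\ref{conditionalboundary} applies on $C_\rho$ at scale $m$. By the $q$-harmonicity of $h_m^\eta$ and $v_\eta$ on $D_\rho\setminus\eta$ (both vanishing on $\eta$), represent each by its $q$-Poisson integral over $C_\rho$. Lemma~\ref{qBeurling}'s reflection-based bound $\sum_{z' \in C_\rho}|H^q_{D_\rho \setminus \eta}(z,z')| \leq c(r/\rho)^{1/2}$ is crucial here, since it controls the weighted sum despite $v$ growing like $|\cdot|^{1/2}$. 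Approximating the integrand on $C_\rho$ via the previous step reduces the problem to the small-$|z'|$ regime there.

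The main obstacle is the delicate error bookkeeping: since $v$ and $v_\eta$ grow like $|\cdot|^{1/2}$ but vanish near $\Z_+$, absolute and relative error estimates must be reconciled, and the intermediate scale $\rho$ must be chosen to balance the $m^{-\beta}$ error coming from Lemma~\ref{conditionalboundary} against the $(r/\rho)^{1/2}$ error coming from the scale separation, keeping the total within $(m/r)^{-(1+u)/2}\mu_m(w)$. In addition, the entire argument depends on a polynomial convergence rate in Proposition~\ref{limitexists}, the proof of which is deferred to the Appendix.
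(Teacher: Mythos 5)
Your first-passage decomposition $h_m^\eta(z,w) = h_m(z,w) - \sum_{y\in\eta}H^q_{D_m\setminus\eta}(z,y)h_m(y,w)$ is correct, and your overall strategy --- approximate each factor and assemble via the definition of $v_\eta$ --- is a genuinely different route from the paper's. The paper never decomposes $h_m^\eta$ at $z$. Instead it applies the strong Markov property at the intermediate circle $C_{ar}$ with $a^2r = m$, writes $h_m^\eta(z,w) = \sum_{x\in C_{ar}}H^q_{D_{ar}\setminus\eta}(z,x)h_m^\eta(x,w)$, proves $h_m^\eta(x,w) \approx \Prob^x\{\sigma_m<\tau_+\}\mu_m(w)$ for $x\in C_{ar}$ (using your decomposition only at that intermediate scale), and reduces matters to showing that the resulting quantity $\kappa_m(\eta,z) = m^{1/2}\sum_x H^q_{D_{ar}\setminus\eta}(z,x)\Prob^x\{\sigma_m<\tau_+\}$ converges to $v_\eta(z)$. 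That last step is done not by substituting $\Prob^x\{\sigma_m<\tau_+\}\approx m^{-1/2}v(x)$ pointwise, as you do, but by an integral identity: pairing \eqref{aug29.1} against the test function $\sin(\theta_w/2)$ and invoking Lemma~\ref{piover4} together with the representation $v_\eta(z)=\sum_w H^q_{D_m\setminus\eta}(z,w)v_\eta(w)$. Your approach is arguably more transparent --- it bypasses the $\kappa_m$ device and the testing step --- at the cost of leaning more heavily on the quantitative rate of Proposition~\ref{limitexists}.

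The gap is in the conditional-exit-distribution estimate and the attempt to handle large $r$. Lemma~\ref{conditionalboundary}, as stated, gives a relative error $O(m^{-\beta})$ only under the hypothesis $|z'|\leq m^\alpha$ with a fixed $\alpha<1$. The lemma you are proving, however, must hold uniformly for all $r\leq m/2$, and the triangle inequality alone yields only $c(r/m)^{1/2}\mu_m(w)$, which is \emph{larger} than the target $c(r/m)^{(1+u)/2}\mu_m(w)$; so the sliver $m^\alpha < r \leq m/2$ requires a genuine argument. Your proposed fix --- an intermediate scale $\rho$ with $r\ll\rho\ll m$ ``chosen so that Lemma~\ref{conditionalboundary} applies on $C_\rho$'' --- forces $\rho\leq m^\alpha$, which is incompatible with $\rho\gg r$ in exactly this regime. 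What you actually need is the statement that for $x\in D_{ar}\cup C_{ar}$ one has $h_m(x,w) = \Prob^x\{\sigma_m<\tau_+\}\mu_m(w)[1+O(a^{-u})]$ with $a=(m/r)^{1/2}$, i.e.\ an error scaling in $m/r$ rather than in $m$; this is the paper's display \eqref{htomu}, obtained by re-running the scale-doubling coupling from Lemmas~\ref{lem:lawler-limic} and~\ref{origintointeriorrw} starting at scale $\sim r$, and it does not follow by citing Lemma~\ref{conditionalboundary} as stated. With that strengthened input in hand, your substitution-and-bookkeeping plan closes (the sources of error you list are the right ones and combine correctly provided $u$ is taken small relative to $\beta$), so the issue is a gap in the supporting estimate rather than in the architecture of the proof.
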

    
\begin{proof}
      We 
   define
   $a$ by $m = a^2 r$.  
Using \eqref{071219} and coupling $h$-processes started from different points as in Lemma \ref{conditionalboundary}, we can see that there exists $0 < u < 1/2$
    such that  for any $x \in D_{ar } \cup C_{ar }, w \in C_m$,
\begin{equation}\label{htomu}
  h_m(x,w) =
      \Prob^x\{\sigma_m < \tau_+\} \, 
        \mu_m(w) \ [1 + O(a^{-u})],
\end{equation}
   where     
\begin{equation}\label{mu_m}       \mu_m(w) =  \frac{h_m(-1,w)}{\Prob^{-1}
      \{\sigma_m < \tau_+\}} \asymp m^{-1} \, 
          \sin(\theta_w/2).
\end{equation}
 Also, if $x \in C_{ar},$ using $m=a^2 r$ and $\diam(\eta) \le r$,
\begin{eqnarray}\label{htoheta} \notag
   |h_m(x,w) - h_m^\eta(x,w)|
    & \leq & \sum_{y \in \eta}
       |H^q_{ D_{m} \setminus\eta } 
         (x,y)| \, |h_m(y,w)| \\ \notag
    &  \leq & c \, a^{-1 } \, \mu_m(w)
      \, \sum_{y \in \eta}
     |H^q_{ D_{m} \setminus\eta} 
         (x,y)|\\&  \leq & c 
        \, a^{-3/2 } \, \mu_m(w),
  \end{eqnarray}
  where we used \eqref{htomu} and the Beurling estimate for the second inequality and Lemmas \ref{prop.kias} and \ref{withoutAn} in the last inequality.
Since for $x \in C_{ar}, \Prob^x\{\sigma_m < \tau_+\}
\asymp a^{-1/2} \,  \sin(\theta_x/2)$,
we get from \eqref{htomu} and \eqref{htoheta}
\[   |h_m^\eta(x,w) -  \Prob^x\{\sigma_m < \tau_+\} \, 
        \mu_m(w) | \leq c \,a^{-1/2}\, \mu_m(w)
        \, [a^{-1}+ \sin(\theta_x/2) a^{-u}    
          ]. 
         \]
By the strong Markov property (using that $h_m^\eta(\cdot, w) \equiv 0$ on $\eta$), we can write 
\begin{eqnarray*}
h_m^\eta(z,w)
  & = & \sum_{x \in C_{ar}}
     H_{ D_{ar} \setminus\eta }^q(z,x) \, h_m^\eta
     (x,w), 
\end{eqnarray*}
and hence
\begin{eqnarray*}
 \lefteqn{\left|h_m^\eta(z,w)
    - \sum_{x \in C_{ar}} 
   H_{ D_{ar} \setminus\eta }^q(z,x) \,
    \Prob^x\{\sigma_m < \tau_+\} \, 
        \mu_m(w)   \right|}\\
        & \leq & c
    \sum_{x \in C_{ar}}  |H_{ D_{ar} \setminus\eta }^q(z,x)|
     \,  \mu_m(w)
        \, a^{-1/2}\,a^{-u}\\
        & \leq & c\, (m/r)^{-\frac {1+u}{2}}
          \, \mu_m(w),
\end{eqnarray*}   
where the last inequality follows from Lemma \ref{qBeurling}. This gives   
\begin{equation}
\label{aug29.1}
  \left|h_m^\eta(z,w)
  -
      \kappa_m(\eta,z) \, m^{-1/2}\,  \mu_m(w)   \right|
      \leq  c\, (m/r)^{-\frac {1+u}{2}}
          \, \mu_m(w),
          \end{equation}
   where
 \[   \kappa_m(\eta,z) =   m^{1/2}\sum_{x \in C_{ar}} 
   H_{D_{ar} \setminus\eta }^q(z,x) \,
    \Prob^x\{\sigma_m < \tau_+\}   .\]
We want to show that this is close to $v_\eta(z)$. Note that by Lemma \ref{qBeurling},
\begin{equation}\label{kappabound}
 |\kappa_m(\eta,z) | \leq m^{1/2}\max_{x \in C_{ar}}
 \Prob^x\{\sigma_m < \tau_+\}  
\sum_{x \in C_{ar} }
   |H_{ D_{ar} \setminus\eta }^q(z,x) |
     \leq O(r^{1/2}).
     \end{equation}

Using \eqref{mu_m} and \eqref{aug29.1}, we see that
\begin{equation}     \label{09-06b} 
 \left|   \sum_{w \in C_m} [ h_m^{\eta}  
       (z,w)
        - \kappa_m(\eta,z)  m^{-1/2}
         \mu_m(w)]  \,   \frac 4 \pi \,  m^{1/2}  \, \sin(\theta_w/2)\right|  \leq c \, m^{-u}r^{(1+u)/2}.
         \end{equation}
 But we know from Lemma \ref{piover4} that
 \[   
 \sum_{w \in C_m}
        \mu_m(w) \, \sin(\theta_w/2) =
            \frac \pi 4 + O(m^{-u}), \]
so by \eqref{kappabound}

\begin{equation}     \label{09-06c} 
 \left|   \sum_{w \in C_m}  h_m^{\eta}  
       (z,w) \frac 4 \pi \, m^{1/2}  \, \sin(\theta_w/2)
        - \kappa_m(\eta,z) 
        \right|  \leq c \, m^{-u/2}r^{(1+u)/2}.
         \end{equation}
         
For $w \in C_m$, it follows from Lemma \ref{prop.kias} and the boundedness of $v$ implied by Proposition \ref{limitexists} that
\begin{equation}\label{v_eta} v_\eta(w) = v(w) + O((r/m)^{1/2})
   =   \frac 4 \pi \, \sqrt m  \, \sin(\theta_w/2)
     + O((r/m)^{1/2}).
\end{equation}

Since by the strong Markov property (using that $v_\eta \equiv 0$ on $\eta$),
\[   v_\eta(z) =  \sum_{w \in C_m}  H_{  D_m \setminus\eta }^q
       (z,w) \,  v_\eta(w),\]
equations \eqref{07-26} and \eqref{v_eta}, together with Lemma \ref{qBeurling}, imply that

\begin{eqnarray}\label{09-06a} \notag
\lefteqn{\left| v_\eta(z) - \sum_{w \in C_m}  h_m^{\eta}  
       (z,w) \,   \frac 4 \pi \, \sqrt m  \, \sin(\theta_w/2)\right|}\\ \notag
    & = & 
    \left| v_\eta(z) - \sum_{w \in C_m}  H_{  D_m \setminus\eta }^q
       (z,w) \,  \frac 4 \pi \, \sqrt m  \, \sin(\theta_w/2)\right|\\ \notag
       & \leq & O((r/m)^{1/2})
          \sum_{w \in C_m} |H_{ D_m \setminus\eta }^q
       (z,w)|  \\
       & = &  O(r/m).
       \end{eqnarray}
 Therefore, by \eqref{09-06c} and \eqref{09-06a}, since $0<u<1/2$,
\begin{equation*}\label{kappalimit}
    v_\eta(z) = \kappa_m(\eta,z)  + O(m^{-u/2}r^{(1+u)/2}),
    \end{equation*}
 which shows that $\lim_{m\to \infty} \kappa_m(\eta,z) = v_\eta(z)$
and \eqref{aug29.1} becomes 
\[  \left|h_m^\eta(z,w)
  -
      v_\eta(z)   \, m^{-1/2}\,  \mu_m(w)   \right|
      \leq  c\, (m/r)^{-\frac {1+u}{2}}
          \, \mu_m(w). \]

  \end{proof}

   \begin{lemma}\label{lem:jul26} There exists $0 < c_0 < \infty$ such that 
   for all $z \in \Z^2$, 
  \[  u_\eta(z) = c_0 \lim_{n \rightarrow \infty}
         n^{3/2}\,  u_\eta^{(n)}(z) ,\]
         \[  v_\eta(z) = c_0 \lim_{n \rightarrow \infty}
         n^{3/2}\, v_\eta^{(n)}(z) .\]
         \end{lemma}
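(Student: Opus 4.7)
The plan is to prove the statement for $v_\eta$; the argument for $u_\eta$ is analogous, replacing the target point $-n$ with $n$. The strategy is twofold: (a) establish that $n^{3/2} v_\eta^{(n)}(z)$ is, modulo vanishing errors, a $z$- and $\eta$-independent constant times $v_\eta(z)$, via a Markov decomposition combined with Lemma~\ref{limitpartial}; and (b) identify that constant by a direct asymptotic in a specific tractable case.

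For (a), fix $z$ and set $r := \max\{|z|, \diam(\eta)\}$. For $r \ll m \ll n$, the strong Markov property at the first visit to $C_m$ (using $v_\eta^{(n)}\equiv 0$ on $\eta$) gives
\[v_\eta^{(n)}(z) = \sum_{x \in C_m} H^q_{D_m \setminus \eta}(z, x) \, v_\eta^{(n)}(x).\]
Decompose $H^q_{D_m \setminus \eta}(z, x) = h_m^\eta(z, x) + \tilde h_m^\eta(z, x)$ into its symmetric and antisymmetric parts under $x \mapsto \bar x$, where $\tilde h_m^\eta(z, x) := \frac{1}{2}[H^q_{D_m \setminus \eta}(z, x) - H^q_{D_m \setminus \eta}(z, \bar x)]$. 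Lemma~\ref{limitpartial} applied to the symmetric part (together with the identity~\eqref{07-26}) yields
\[v_\eta^{(n)}(z) = v_\eta(z) \cdot m^{-1/2} \, \Psi_{m, n}\bigl(1 + O((r/m)^{(1+u)/2})\bigr) + R_{m, n}(z),\]
where $\Psi_{m, n} := \sum_{x \in C_m} \mu_m(x) \, \tfrac12[v_\eta^{(n)}(x) + v_\eta^{(n)}(\bar x)]$ is independent of $z$, and $R_{m, n}(z) := \sum_x \tilde h_m^\eta(z, x) v_\eta^{(n)}(x)$. Lemma~\ref{qBeurling} gives $\sum_x |\tilde h_m^\eta(z, x)| \leq c\sqrt{r/m}$, and combined with the near-symmetry $v_\eta^{(n)}(x)\approx v_\eta^{(n)}(\bar x)$ for $x\in C_m$ (reflecting the continuum identity $v(\bar x)=v(x)$, quantified via the KMT coupling) the remainder $R_{m,n}(z)$ is $o(m^{-1/2}\Psi_{m,n})$.

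It follows that $n^{3/2} v_\eta^{(n)}(z)/v_\eta(z) = n^{3/2} m^{-1/2} \Psi_{m, n}(1 + o(1))$, with the right-hand side independent of $z$, and, up to $o(1)$, independent of $\eta$ by a comparison of $\Psi_{m,n}(\eta)$ with $\Psi_{m,n}(\{0\})$ using Lemma~\ref{withoutAn}. Specializing to $z_0 = -1$, $\eta_0 = \{0\}$ (so $v_{\{0\}} = v$) reduces the entire lemma to establishing the single limit $c_\ast := \lim_{n \to \infty} n^{3/2} H^q_{A_n \setminus \{0\}}(-1, -n)$, and then the constant in the statement is $c_0 = v(-1)/c_\ast$.

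The main obstacle is to establish existence and positivity of $c_\ast$ together with the near-symmetry estimate controlling $R_{m, n}$. Both are addressed via the KMT coupling of Lemma~\ref{kmt} with Brownian motion, in the spirit of Lemma~\ref{piover4} and Proposition~\ref{limitexists}. The conformal map $z \mapsto \sqrt z$ uniformizes the double cover of $\Z^2 \setminus \R_+$ and converts $q$-weighted walks on $A_n$ to unsigned walks on a lifted simply connected domain $\tilde D_n \subset \mathbb{H}$ of diameter $\asymp \sqrt n$, with $-n$ mapping to the boundary point $i\sqrt n$ and $-1$ to $i$. The constant $c_\ast$ then emerges from the Brownian Poisson kernel in the rescaled limit domain $\tilde D_1$ at $i$ (together with the Jacobian $|(\sqrt{\cdot})'(-n)| = 1/(2\sqrt n)$ from the lattice near $-n$), and the continuum symmetry $v(\bar x)=v(x)$ furnishes the discrete near-symmetry through the coupling.
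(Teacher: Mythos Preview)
Your overall architecture---decompose at $C_m$ via the strong Markov property, then feed the inner factor into Lemma~\ref{limitpartial}---is exactly the paper's. Where you diverge is in how you handle the symmetry needed to invoke~\eqref{07-26}, and in how you extract the constant.

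\textbf{On the symmetry step.} The paper takes $m=n/2$ and \emph{first} strips $\eta$ from the outer factor, replacing $H^q_{A_n\setminus\eta}(w,-n)$ by $H^q_{A_n}(w,-n)$ with an $O(n^{-2})$ error (using Lemmas~\ref{prop.kias} and~\ref{withoutAn}). After that, the outer factor is exactly conjugation-symmetric in $w$: once the walk avoids the branch point, a gauge transformation ($\sigma\equiv-1$ on $\Z_+^*$, $+1$ elsewhere) shows $H^q_{A_n\setminus\{0\}}(\bar w,-n)=H^q_{A_n\setminus\{0\}}(w,-n)$, so~\eqref{07-26} applies directly and the antisymmetric remainder vanishes. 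Your route---retain $v_\eta^{(n)}$ in the outer factor and argue $v_\eta^{(n)}(x)\approx v_\eta^{(n)}(\bar x)$ through a KMT coupling---works in principle but is doing unnecessary labor. The near-symmetry you want is not a continuum approximation at all; it is an exact lattice identity once $\eta$ is removed, and your subsequent step (``independent of $\eta$ via Lemma~\ref{withoutAn}'') is essentially the stripping that the paper does up front. Reordering the argument as the paper does eliminates the KMT detour entirely.

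\textbf{On the constant.} Your plan to compute $c_\ast$ by lifting to the double cover via $z\mapsto\sqrt z$ and running KMT there is suggestive but far from a proof: making the lattice double-cover precise and transferring the Poisson-kernel asymptotics across it is nontrivial. The paper instead stays in the base: it observes that $\sum_{w\in C_m}\mu_m(w)H^q_{A_n}(w,-n)$ can be rewritten (via the same reflection/gauge symmetry) in terms of the \emph{unsigned} Poisson kernel $H_{A_n^-}(w,-n)$ in the slit strip $A_n^-=A_n\setminus\Z_+$, and then compares this directly with the Brownian quantity using the coupling of Lemma~\ref{kmt} together with the Green's function convergence of \cite[Theorem~8.1]{BJK}, yielding $c'/n\,[1+O(n^{-u})]$ for an explicit (positive) $c'$. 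This is both more concrete and closer to the estimates already established in Section~\ref{est-sec}.
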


     \begin{proof}    We will prove the second limit; the first
   is done similarly.   Let $m = n/2$ and write
   \[   v_\eta^{(n)}(z)
    = \sum_{w \in C_m}
      H_{D_m \setminus\eta }^q(z,w) \,  H_{A_n \setminus\eta }^q(w,-n).\]
For $w \in C_m$,
\begin{eqnarray*}   
H_{A_n \setminus \eta}^q(w,-n)
&=& H^q_{A_n}(w,-n) - \sum_{y \in \eta} H_{A_n \setminus\eta}^q(w,y) 
\, H_{A_n}^q(y,-n)\\
& =& H^q_{A_n}(w,-n) + O(n^{-2}),
\end{eqnarray*}
where the second equality follows from Lemmas \ref{prop.kias}, \ref{withoutAn}, the Beurling estimate, and the fact that the discrete Poisson kernel in the half-plane is a discrete version of the Cauchy distribution (see, e.g., Lemma 4.2.1 in \cite{benesthesis}).
 Therefore, by Lemma \ref{qBeurling} and \eqref{07-26},  
 \begin{eqnarray*}   
  v_\eta^{(n)}(z) + O(m^{-5/2})
 & = & 
      \sum_{w \in C_m}
      H_{D_m  \setminus\eta}^q(z,w) \,  H_{A_n}^q(w,-n)\\
    &   = &\sum_{w \in C_m}
      h_m^\eta(z,w) \,  H_{A_n}^q(w,-n).
      \end{eqnarray*}
Using the Lemma \ref{limitpartial}, we see that
\[\sum_{w \in C_m}
      h_m^\eta(z,w) \,  H_{A_n}^q(w,-n)
      = O(m^{-\frac{3+u}{2})}
 +  v_\eta(z)
  \sum_{w \in C_m}
      \frac{\mu_m(w) \,  H_{A_n}^q(w,-n)}{m^{1/2}}.\]

Note that symmetry and the argument of Lemma \ref{piover4} imply that
\begin{eqnarray*}
 &&\sum_{w \in C_m}
 \mu_m(w)  H_{A_n}^q(w,-n)\\
 & & \hspace{3pc}= \sum_{w \in C_m}
\frac{\Prob^{-1}(S(\sigma_m\wedge \tau_+)=w)}{\Prob^{-1}(\sigma_m<\tau_+)} \,  H_{A_n^-}(w,-n)\\
 & & \hspace{3pc} = \sum_{w \in C_m, d(w,m)\geq m^{3/4}}
\frac{\Prob^{-1}(S(\sigma_m\wedge \tau_+)=w)}{4\Prob^{-1}(\sigma_m<\tau_+)}  \\
& &\hspace{14pc} \times G_{A_n^-}(w,-(n-1))(1+O(n^{-1/2})).
 \end{eqnarray*}
The argument of Lemma \ref{piover4} and Theorem 8.1 in \cite{BJK} imply that 
$$ \mu_m(w) \,  H_{A_n}^q(w,-n)=\frac{1}{2\pi}\int\bar{h}(-1,w)g_{A_n^-}(w,-(n-1))(1+O(n^{-1/2+\epsilon}))\,dw,$$
where $\bar{h}$ is the Brownian Poisson kernel in the slit disk conditional on not leaving at the slit and the integral is over $\{w:|w|=m, d(w,m)\geq R^{3/4}\}$. This last expression can be shown to equal $\frac{c'}{n} \, [1+ O(n^{-u})]$ for some $c'$.

 \end{proof}
  
\subsection{Proof of Proposition~\ref{main} and an example}
   
   \begin{proof}[Proof of Proposition~\ref{main}] Recall that we want to check that
   \[  \hat{p}(\eta^\zeta \mid \eta) = \frac{\hat{p}(\eta^\zeta)}{\hat p (\eta)}= \frac 14 \, G_{\Z^2\setminus\eta}^q(\zeta,\zeta)
\, \left|\frac{D^q(\eta^\zeta)}{D^q(\eta)}\right|,\]
where
\[ D^q(\eta) = \det \left[\begin{array}{cc}
                 L^qv_\eta(\eta_-) & L^qv_\eta(\eta_+)\\
                    L^q u_\eta(\eta_-) & L^q u_\eta(\eta_+)
                      \end{array} \right].\]
   First note that Lemma~\ref{lem:jul26} implies
   \[
   \lim_{n\to \infty}\frac{D^q_n(\eta^\zeta)}{D^q_n(\eta)} = \frac{D^q(\eta^\zeta)}{D^q(\eta)}.
   \]
   We will use Theorem 3.1 of \cite{BLV} applied to
       $(A_n,-n,n)$.  We state the result here using the
    notation we use in the present paper with $\tilde \eta := \eta^\zeta$.  This exact
    expression is obtained by considering the loop measure
   description of LERW with the signed measure $q$ and making
   use of an identity due to Fomin; see Section~3 of \cite{BLV} for more details. We have
   \[
   \frac{\hat p^{(n)}(\tilde \eta)
       + \hat p^{(n)}(\tilde \eta^R) }       
    {  \hat p^{(n)} (\eta) + \hat p^{(n)} (  \eta^R)}  
     = \frac{1}{4} \, G^q_{A_n \setminus \eta}(\zeta,\zeta) \,
       \left|\frac{D^q_n({\tilde \eta})}{D^q_n( \eta)}\right|,  \]
       where $\eta^R$ is the reversed path.
  A similar expression using the loop measure and Fomin's identity
  with the original probability $p$ allows one to conclude
$ \hat p^{(n)} (\eta) \approx    \hat p^{(n)} (  \eta^R)$ and similarly
for $\tilde \eta$.  More precisely,
\[             \hat p^{(n)} (\eta) =  \hat p^{(n)} (  \eta^R)
\, \left[1 + O_\eta(n^{-\beta}) \right]. \]
See Section~3 of \cite{BLV} for details.
Therefore,
\[  \lim_{n \rightarrow \infty} 
  \frac{\hat p^{(n)}(\tilde \eta)}
      {\hat p^{(n)}(  \eta ) }  = \lim_ {n \rightarrow \infty}
          4^{-1} \, G^q_{A_n \setminus \eta}(\zeta,\zeta) \,
       \left|\frac{D^q_n({\tilde \eta})}{D^q_n( \eta)}\right| =
        4^{-1} G^q_{\Z^2 \setminus \eta}(\zeta,\zeta)
         \,   \left|\frac{D^q ({\tilde \eta})}{D^q ( \eta)}\right|.\]
              
 \end{proof}  

We conclude this paper with an explicit result which Kenyon and Wilson also obtained through a different argument in \cite{KenWil}:
 \begin{example}Although $\hat{p}(\eta)$ is hard to compute in general, one nice
 case is the straight line when $d=2$, $\eta = \eta_k  = [0,1,2\ldots,k],\,  w=k+1$, so that $\eta^w= \eta_{k+1}$.  We first note that
 \[      F^q_{\Z^2\setminus\eta}(k+1,k+1) = F_{\Z^2 \setminus \{\ldots,k-1,k\}}(k+1,k+1) =
                F_{\Z_-}(1,1) = 4(\sqrt 2 - 1) . \]
  The first equality uses symmetry --- if we consider loops that hit the negative real axis, the total $q$ weight is zero since positive loops (those come from the positive $y$-axis) cancel with negative loops. The last equality takes work but is known, see \cite[Proposition 9.9.8]{LL}.  Also, symmetry shows
  that
  \[    \dlaplace^qv_\eta^{(n)}(0) =  \dlaplace^q u_{\eta}^{(n)}(k) = 0 , \]
   \[    \dlaplace^q v_\eta^{(n)}(0) = H_{\p(A_n \setminus \Z_+)}(0,-n) ,\]
   \[     \dlaplace^q u_\eta^{(n)}(k) = 
   H_{\p(A_n \setminus \{\ldots,k-1,k\})}(k,n) , \]
and by taking the limit as $n \rightarrow \infty$, we see that
\[       D^q( \eta^w) = D^q(\eta), \;\;\;\; \hat p( \eta^w \mid \eta)
 = \sqrt  2 -1.
 \]
 Hence,
 \[
    \hat p(\eta_k) = \frac{1}{4} \, (\sqrt 2 - 1)^{k-1}. \]
    \qed
  \end{example}

\section*{Appendix: Proof of Proposition \ref{limitexists}}
In this appendix, we prove Proposition \ref{limitexists}, which we restate now for convenience:

\newtheorem*{limitexists}{Proposition \ref{limitexists}}
\begin{limitexists}

    For each $z$, the limit
\[   v(z) := \lim_{R \rightarrow \infty} R^{1/2}
  \, \Prob^z\{\sigma_R < \tau_+\} \]
  exists.  Moreover, there exists $c < \infty$
  such that for all $z \in \Z^{2} \setminus \{0\}$
  \[       \left| v(z) - \frac 4 \pi \, \Im \, \sqrt z  \right|
     \leq   c\, \frac{\sin(\theta_{z}/2)}{|z|^{1/2}}.\]
 \end{limitexists}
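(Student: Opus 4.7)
The plan is as follows. Set $f(z):=\Im\sqrt{z}$ (extended by $0$ on $\Z_+$) and $F_R(z):=\E^z[f(S_{\sigma_R\wedge\tau_+})]$. The idea is to use Lemma~\ref{piover4} to reduce the proposition to the convergence and error analysis of $F_R$ as $R\to\infty$, and then handle $F_R$ with the discrete Green's identity combined with Taylor estimates on $\dlaplace f$.

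Since $f(w)=R^{1/2}\sin(\theta_w/2)$ on $C_R$ and $f\equiv 0$ on $\Z_+$, we have $F_R(z)=R^{1/2}\E^z[\sin(\theta_{S(\sigma_R)}/2)\mathbf{1}_{\sigma_R<\tau_+}]$, so Lemma~\ref{piover4} directly yields, for $|z|\le R^{3/4}$,
\[ R^{1/2}\Prob^z\{\sigma_R<\tau_+\}=\frac{4}{\pi}F_R(z)\bigl(1+O(R^{-\beta})\bigr). \]
Hence existence of $v(z)$ and the claimed error bound both translate into analogous statements for $F_R(z)$. Applying the optional-stopping identity to $f$ and the walk killed at $\sigma_R\wedge\tau_+$ gives
\[ F_R(z)-f(z)=\sum_{w\in D_R\setminus\Z_+}G_{D_R\setminus\Z_+}(z,w)\,\dlaplace f(w). \]
A Taylor expansion using $f^{(k)}(w)=O(|w|^{1/2-k})$ and the vanishing of the continuous Laplacian of $f$ off $\R_+$ yields $\dlaplace f(w)=O(|w|^{-7/2})$ for $w$ at distance at least $2$ from $\Z_+$; in the single row directly adjacent to $\Z_+$ the non-smoothness of $f$ across its branch cut gives only $\dlaplace f(w)=O(|w|^{-5/2})$, but the small size $f(w)=O(|w|^{-1/2})$ there compensates. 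Combined with the Beurling-type bound $G_{\Z^2\setminus\Z_+}(z,w)\le c\,f(z)f(w)/|w|$ valid for $|w|\ge 2|z|$ (and its symmetric version for $|w|\le|z|/2$), the sum converges absolutely and uniformly in $R$, so $F_R(z)\to F_\infty(z):=f(z)+\sum_{w}G_{\Z^2\setminus\Z_+}(z,w)\,\dlaplace f(w)$, and one sets $v(z):=(4/\pi)F_\infty(z)$.

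The main obstacle is the sharp error bound $|v(z)-(4/\pi)\Im\sqrt{z}|\le c\sin(\theta_z/2)/|z|^{1/2}$, which requires extracting the full angular factor $\sin(\theta_z/2)$ rather than merely $f(z)=|z|^{1/2}\sin(\theta_z/2)$. To handle this I would split the sum defining $F_\infty(z)-f(z)$ into dyadic annuli $|w|\asymp 2^k|z|$ and use a Harnack-type factorization of $G_{\Z^2\setminus\Z_+}$ reflecting the vanishing of the Green's function on $\Z_+$, so that $G(z,w)/f(w)$ carries an explicit factor of $\sin(\theta_z/2)/|z|^{1/2}$ for $|w|\gg|z|$, with a symmetric statement when $|w|\ll|z|$. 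Pulling this factor out of each annular contribution reduces the problem to summing a convergent geometric series in $k$, and the contributions from the row adjacent to $\Z_+$ are treated separately using the smaller size of $f$ there but yield the same order.
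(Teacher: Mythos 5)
Your proposal follows the same overall route as the paper: reduce to the convergence of $f_R(z) := \E^z[f(S_{\sigma_R\wedge\tau_+})]$ via Lemma~\ref{piover4}, write the Green's/optional-stopping identity $f_R(z) - f(z) = \sum_w G_{D_R\setminus\Z_+}(z,w)\,\dlaplace f(w)$, control $\dlaplace f$ by comparison with the vanishing continuous Laplacian, and sum dyadically. Two implementation points differ. First, you claim only $\dlaplace f(w) = O(|w|^{-5/2})$ on the rows adjacent to $\Z_+$ and argue this is compensated there; the paper instead notes that on the neighbors relevant to $\dlaplace f(w)$ the function $f$ agrees (up to sign) with $\Im\sqrt{z}$ taken with the \emph{principal} branch, which is smooth across the positive real axis, so the reflection trick yields $\dlaplace f(w) = O(|w|^{-7/2})$ uniformly on $A=\Z^2\setminus\Z_+$ and keeps the dyadic bookkeeping uniform across rows. (Your weaker bound can still be made to work, but note the compensation really comes from the smallness of $G(z,\cdot)$ near $\Z_+$, not of $f$ itself, so you are implicitly already relying on your Green's function estimate there.) Second, you invoke a pointwise bound $G_{\Z^2\setminus\Z_+}(z,w)\le c\,f(z)f(w)/|w|$ for $|w|\ge 2|z|$ (plus its reflection); this mirrors the exact continuous Green's function of $\C\setminus[0,\infty)$ and is morally correct, but it requires a nontrivial discrete Harnack argument to justify on the lattice and your stated range leaves $|z|/2\le|w|\le 2|z|$ uncovered. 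The paper instead proves the \emph{annular} estimate $\sum_{R\le|w|\le 2R}G_A(z,w)\le c\,\sin(\theta_z/2)\sqrt{|z|/R}\,R^2$ by an expected-exit-time and Beurling-type argument, which is all that is needed and is easier to establish rigorously. Finally, your closing paragraph treats extracting the factor $\sin(\theta_z/2)$ as a separate obstacle, but $c\,\sin(\theta_z/2)/|z|^{1/2} = c\,f(z)/|z|$ exactly, so the estimate $|f_R(z)-f(z)|\le c\,f(z)/|z|$ coming out of the Green's identity already has precisely the form stated in the proposition and no further factorization step is required.
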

 \begin{proof}  Define
   \[       f(z) = \Im \sqrt z =   |z|^{1/2} \, \sin(\theta_z/2)
   .\]
We write $\Delta$ for the continuous and
 $L$ for the discrete Laplacian.  We also write
 $A = \Z^2 \setminus \Z_+$.  Since $f$ is the imaginary part
 of a holomorphic function on $\C \setminus [0,\infty)$,
  $\Delta f(z) = 0$ for $z \in 
 A$.
Let
\[ 
    f_R(z) =   \E^z\left[f(S(\sigma_R \wedge \tau_+)) \right].\]
    Note that
\begin{eqnarray}\label{fR} \notag
 f_R(z)
& = & [R^{1/2}  +O(1)]\, \E^z\left[\sin(\theta_{S(\sigma_R)}/2) 1\{
   \sigma_R < \tau_+\} \right]\\ \notag
   & = & [R^{1/2}  +O(1)] \, \Prob^z\{ \sigma_R < \tau_+\}
    \, \E^z\left[\sin(\theta_{S(\sigma_R)}/2) \mid
   \sigma_R < \tau_+ \right]\\ 
   &= & R^{1/2}\, \Prob^z\{ \sigma_R < \tau_+\}
    \, \frac{\pi}{4} (1+O(R^{-(\beta\wedge 1/2)})),
\end{eqnarray}
where the last estimate follows from Lemma \ref{piover4}.  
 The main estimate we will prove is the following:
  There exists $c < \infty$ such that for all
 $z$ and all $R > |z|$, 
 \begin{equation}  \label{mainclaim}
    |f(z) - f_R(z)| \leq c \, f(z) \, |z|^{-1}.
\end{equation}

This estimate will imply the lemma as we now show. Suppose $R < R'$. Since $z \mapsto f_R(z) - f_{R'}(z)$ is discrete harmonic in $D_R\setminus \Z_+$ and equals $0$ on the slit and $f\equiv f_R$ on $C_R$ (see \eqref{discretecircle}), we have
\[
|f_R(z) - f_{R'}(z)| \le \sum_{w \in C_R}H_{C_R}(z,w)|f(w) - f_{R'}(w)|. 
\]
Using \eqref{mainclaim}, 
\[
 \sum_{w \in C_R}H_{C_R}(z,w)|f(w) - f_{R'}(w)| \le  \sum_{w \in C_R}H_{C_R}(z,w)|f(w)|R^{-1} \le c \, R^{-1/2}.
\]
It follows that $\lim_{R \rightarrow \infty}  f_R(z)$ exists and, by \eqref{fR}, so does the limit defining $v$ and we have
\[       \lim_{R \rightarrow \infty}  f_R(z) = \frac{\pi}{4} v(z). \]
The proof of the proposition will therefore be complete once we prove \eqref{mainclaim}, which we will now do.

If $f$ were discrete harmonic on $A$, then we would have
$f(z) = f_R(z)$.  The key estimate is obtained by comparing the
discrete and continuous Laplacian.
  First, notice that there
 exists $c < \infty$ such that
 \begin{equation}  \label{jan10.1}
       |Lf(z)| \leq \frac{c}{|z|^{7/2}}, \;\;\;\;
  z \in A.
  \end{equation}
 Indeed, $f$ is (continuous) harmonic and since the discrete Laplacian $L$ is a finite difference approximation to the continuous Laplacian $\Delta$, there is a universal $c$ such that
 \[    \left|   Lf(z)  - \frac{1}{2d} \,  \Delta f(z)  \right| \le c  M(z), \]
where $M(z)$ is the maximum value of
 the \emph{fourth} derivatives of $f$
  in the disk of radius $1$
 about $z$. See Section 1.5 in \cite{LL} for more details. 
 One could compute the fourth derivatives directly but it
 is easier to 
 use the bound obtained by differentiating the Poisson formula.  If $\sin(\theta_z/2)$ is small, then
 we can use the reflected harmonic function about zero (that is,
 changing the sign to negative in the negative imaginary
 half plane).  This gives a harmonic function on the ball
 of radius $|z|/2$ about $z$ whose maximum value
 is $O(|z|^{1/2})$ and hence whose fourth derivatives at $z$
 are $O(|z|^{-7/2})$.

 We now claim that there exists $c$ such that for all $z$,
 \begin{equation}  \label{jan10.2}
      \sum_{w \in A}  G_A(z,w) \, |Lf(w)| \leq c \,f(z) \, |z|^{-1}.
      \end{equation}
 To see this, we note that if $|z| \leq R$,
  \begin{equation}  \label{jun13.1}
   \sum_{R \leq |w| \leq 2R}  G_A(z,w) \leq c \,\sin(\theta_z/2) \, \sqrt{ \frac{|z|}{R}}
  \, R^2, 
  \end{equation}
which can be shown as follows:  
Let  
 \[       b_R(z) =   \sum_{R \leq |w| \leq 2R}  G_A(z,w), \]
 and write $ b_R = \sup_{|z| \leq 4R}  b_R(z)$. It is easy to verify that there exists $c>0$ such that for $k\in\mathbb{N}, |z|\leq 4R$, 
 $$\Prob^z\{\sigma_{4R}\geq k\}\leq \exp\{-ck/R^2\},$$
 so
there exists $c_1 < \infty$ such that 
for all $|z| \leq 4R$,
$$\E^z[\sigma_{4R}]=\sum_{k\geq 1}\Prob^z(\sigma_{4R}\geq k)
 \leq c_1\, R^2.$$   
 Also there exists $\rho< 1$ such that
 starting at $ z \in \p C_{4R}$, the probability to reach
 $C_{2R}$ without leaving $A$ is at most $\rho$.  
 Therefore, 
  \[  b_R   \leq c_1 \, R^2 +  \rho \, b_R, \]
  which gives $b_R \leq c_1 R^2/(1-\rho).$
  More generally, using gambler's ruin estimates for one-dimensional walks (see the argument for \eqref{BeurlingSharper} just below),
  we see that
  \[       \E^z[\sigma_{4R}] \leq c \, \sin(\theta_z/2) \, R^2.\]
  If $R/2 \leq |z| \leq R$, we then get
  \[       b_R(z) \leq  c \, \sin(\theta_z/2) \, R^2
       + \Prob^z\{\sigma_R < \tau_+\}\, b_R
          \leq   c \, \sin(\theta_z/2) \, R^2
       + c \, \sin(\theta_z/2) \, \rho \, b_R.\]
    Finally for $|z| \leq R/2$, we get
    \[   b_R(z)  \leq \Prob^z\{\sigma_R < \tau_+\} \, b_R.\]
It follows from Theorem 1 in \cite{LLpaper} that
\begin{equation}\label{LLBeurling}       \Prob^z\{\sigma_R < \tau_+\} \leq
     c \, \sqrt{\frac{|z|}{R}}.
 \end{equation}
It takes little work to show the following stronger estimate:
\begin{equation}\label{BeurlingSharper}       \Prob^z\{\sigma_R < \tau_+\} \leq
     c \,  \sin(\theta_z/2) \, \sqrt{\frac{|z|}{R}}.
 \end{equation}
      
 Indeed, it suffices to consider the case $\theta_z\leq \pi/4$. If $|z|\leq R/2$, then $\sigma_R < \tau_+$ implies that the random walk leaves the square with corners $(\Re \,z/2,0), (3\Re \,z/2,0), (\Re \,z/2,\Re \,z)$, and $(3\Re \,z/2,\Re \,z)$ at a point with nonzero imaginary part. Section 8.1.2 in \cite{LL} implies that this happens with probability comparable to $\sin(\theta_z)$, which is comparable to $\sin(\theta_z/2)$. Any such point has magnitude comparable to that of $z$, so the strong Markov property together with \eqref{LLBeurling} imply \eqref{BeurlingSharper}.
Note that we assumed for simplicity that $\Re \, z/4\in\mathbb{N}$, but the argument is the same without that assumption. 
 
\noindent   This gives \eqref{jun13.1}.  Using \eqref {jan10.1},
   we see that 
   for $|z| \leq R$,
  \[    \sum_{R \leq |w| \leq 2R}  G_A(z,w)\, |Lf(w)| \leq c \,\sin(\theta_z/2) \, \sqrt{ \frac{|z|}{R}}
  \, R^{-3/2} .\]
 In particular, if $n  \geq 0$.
    \begin{eqnarray*}
       \sum_{2^{n-1} |z|  \leq |w| \leq 2^n|z|}  G_A(z,w) \, |Lf(w)| & \leq & c \,\sin(\theta_z/2) \, 2^{-n/2} \, 
   2^{-3n/2} \, |z|^{-3/2}\\
   & \leq &  c\, 2^{-2n}  \, f(z) \, |z|^{-1} 
   \end{eqnarray*}  
  By summing over  all $n \geq 1$, we see that
\begin{equation}  \label{jun13.3}
  \sum_{|w| \geq |z|} G_A(z,w) \, |Lf(w)|
      \leq c \, f(z) \, |z|^{-1}.
      \end{equation}
Similarly, if
  $|z| \geq R$,
    \[   \sum_{R \leq |w| \leq 2R}  G_A(z,w) \leq c \,\sin(\theta_z/2) \,\sqrt{ \frac{R}{|z|}}
  \]
  and hence, using \eqref{jan10.1},
  \begin{eqnarray*}
   \sum_{2^{-n}|z| \leq |w| \leq 2^{-n+1}|z|}G_A(z,w) \, |Lf(w)|
     &  \leq & c  \, \sin(\theta_z/2)  \, 2^{-n/2}
       \,(2^{-n}|z|) ^{-7/2} \\
       & \leq &  c \, \sin(\theta_z/2)  \, 2^{3n}
         |z|^{-7/2} 
         \end{eqnarray*}
If we sum this over all $n \geq 0$ such that $2^{-n} |z|
  \geq 1$, we get
 \[   \sum_{|w| \leq |z|}    G_A(z,w) \, |Lf(w)|     
   \leq c \, \sin(\theta_z/2) \, |z|^{-1/2} = c \, f(z) \, |z|^{-1}.\] 
This, combined with \eqref{jun13.3}, gives   
\eqref{jan10.2}.

A corollary of \eqref{jan10.2} is as follows.  Let $B \subset A$ be finite
and $\tau = \tau_B = \min\{j \geq 0: S_j \not \in B\}$. 
Then, for $z \in B$,
\[     \left| f(z) - \E^z[f(S_{\tau})] \right|
   \leq c \, |z|^{-1/2} \ f(z).\]
 Indeed, if $M_0=f(S_0)$ and 
 \[      M_{n}  =   f(S_n) - \sum_{j=0}^{n-1} \, L f(S_j), \]
 then $M_{n \wedge \tau}$ is a uniformly integrable martingale
 and hence
 \[    \E^z[M_\tau] = \E^z[M_0] = f(z) . \]
 But
 \[   \left| \E^z[M_\tau] - \E^z[f(S_\tau)] \right|
     \leq \sum_{w \in B} G_B(z,w) \, |Lf(w)| \leq c \,f(z) \, |z|^{-1}.\]
This establishes  \eqref{mainclaim} which completes the proof.

 \end{proof}

\section*{Acknowledgements}
The authors are grateful to the referee for reducing the number of errors in this paper and for helping improve its structure. Lawler was supported by National Science Foundation grant DMS-1513036. Viklund was supported by the Knut and Alice Wallenberg Foundation, the Swedish Research Council, the Gustafsson Foundation, and National Science Foundation grant DMS-1308476. 
     
     \bibliographystyle{plain}
\bibliography{TransLERW}
  
\end{document}